\theoremstyle{definition}
\newtheorem{thm}{Theorem}[section]
\newtheorem{dfn}[thm]{Definition}
\newtheorem{note-dfn}[thm]{Notation-Definition}
\newtheorem{prop}[thm]{Proposition}
\newtheorem{cor}[thm]{Corollary}
\newtheorem{lem}[thm]{Lemma}
\newtheorem{rem}[thm]{Remark}
\newtheorem{prop-dfn}[thm]{Proposition-Definition}
\newcommand{\Z}{\mathbb{Z}}
\newcommand{\Q}{\mathbb{Q}}
\newcommand{\F}{\mathbb{F}}
\newcommand{\Spec}{\mathrm{Spec}\,}
\newcommand{\deq}{\overset{\mathrm{def}}{=}}
\title[Shafarevich conjecture for proper hyperbolic polycurves]{%The Shafarevich conjecture for proper hyperbolic polycurves and thier extension theorems
The Shafarevich conjecture and some extension theorems for proper hyperbolic polycurves}
\author{Ippei Nagamachi,\ Teppei Takamatsu}
\date{\today}
\subjclass[2010]{Primary 11G35; Secondary 11G20}
\email{nagachi@ms.u-tokyo.ac.jp,\ teppei@ms.u-tokyo.ac.jp}
\begin{document}

\maketitle

\begin{abstract}
In this paper, we prove the Shafarevich conjecture for proper hyperbolic polycurves, which is a higher dimensional analogue of that for proper hyperbolic curves.
First, we study theories of proper hyperbolic polycurves over regular schemes.
For example, we generalize the moduli theory of Kodaira fibrations due to Jost and Yau  \cite{JY}.
We also show the N\'eron property of proper smooth models of proper hyperbolic polycurves over Dedekind schemes under an assumption on residual characteristics.
We then apply these extension theories to the proof of the Shafarevich conjecture for proper hyperbolic polycurves.
\end{abstract}

\setcounter{section}{-1}

\section{Introduction}
The Shafarevich conjecture for proper hyperbolic curves, which was proved by Faltings, states the finiteness of isomorphism classes of proper hyperbolic curves of fixed genus over a fixed number field admitting good reduction away from a fixed finite set of finite places.
In this paper, we shall establish the generalization of this theorem for the class of proper hyperbolic polycurves,
that is, 
varieties $X$ which admit a structure of successive smooth fibrations (called a sequence of parameterizing morphisms (cf.\ Definition \ref{hyperbolicpoly})) %[cf.¥,Definition ¥ref{hyperbolicpoly}])

\begin{equation*}%¥label{0}
(\mathcal{S}): X = X_n \rightarrow X_{n-1} \rightarrow \cdots 
\rightarrow X_1 
\rightarrow X_{0}= \mathrm{Spec}\,K 
\end{equation*}
whose fibers are proper hyperbolic curves.
%For any hyperbolic curve $X$ over a number field $F$, and 
Let $F$ be a number field, $\mathfrak{p}$ a finite place of $F$, and $O_{F,\,\mathfrak{p}}$ the valuation ring of $F$ at $\mathfrak{p}$.
For any proper hyperbolic polycurve $X$ over $F$, 
we say that $X$ has good reduction at $\mathfrak{p}$ if $X$ admits a smooth proper model over $\mathcal{O}_{F,\,\mathfrak{p}}$ (cf.\ Definition \ref{reddef}).
Our main theorem is the following:

\begin{thm}[see Theorem \ref{Shaf} for a more general statement]
%Let $T$ be a regular connected flat scheme of finite type over $\Spec \Z$.
Let $F$ be a number field, and $S$ a finite set of finite places of $F$. 
Let $\chi$ be an integer, and $n$ a positive integer.
Then there exist at most finitely many isomorphism classes of proper hyperbolic polycurves of dimension $n$ with Euler-Poincar\'e characteristic $\chi$ over $F$ which has good reduction outside $S$.
\label{introShaf}
\end{thm}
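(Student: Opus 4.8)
The plan is to induct on the dimension $n$, which equals the length of a sequence of parameterizing morphisms. First I would record the discrete bookkeeping: the Euler--Poincar\'e characteristic is multiplicative in smooth proper fibrations, and a proper hyperbolic curve of genus $g$ has characteristic $2-2g\le -2$, so a proper hyperbolic polycurve $X$ of dimension $n$ with $\chi(X)=\chi$ and genus profile $(g_1,\dots,g_n)$ (the genera of the successive fibers) satisfies $\prod_{i=1}^{n}(2-2g_i)=\chi$; there are only finitely many such tuples, so it suffices to fix one. I would also enlarge $S$ once and for all: since good reduction outside a smaller set is a stronger condition, replacing $S$ by any finite $S'\supseteq S$ only enlarges the class in question, so I may assume the residual-characteristic hypothesis under which the N\'eron property of proper smooth models holds is satisfied at every $\mathfrak p\notin S$. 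The base case $n=1$ is Faltings' proof of the Shafarevich conjecture for proper hyperbolic curves.

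For the inductive step, let $n\ge 2$, assume the result in dimensions $<n$, and let $X$ range over our class. Choosing a parameterizing morphism $f\colon X\to Y$ presents $X$ as a smooth proper family of genus-$g_n$ curves over a proper hyperbolic polycurve $Y$ of dimension $n-1$ with genus profile $(g_1,\dots,g_{n-1})$. I would first show $Y$ has good reduction outside $S$: for $\mathfrak p\notin S$, pick a smooth proper model $\mathcal X$ of $X$ over $\mathcal O_{F,\mathfrak p}$; by the extension theorems for parameterizing morphisms (and the N\'eron property, which makes this extension canonical) $f$ extends to a smooth proper family $\mathcal X\to\mathcal Y$ over $\mathcal O_{F,\mathfrak p}$, exhibiting a smooth proper model $\mathcal Y$ of $Y$. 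By the inductive hypothesis there are finitely many possibilities for $Y$ up to isomorphism, so I may fix one and count the $X$'s lying over it.

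Next I would spread everything out over the ring of $S$-integers $\mathcal O_{F,S}$: the fixed $Y$ has a smooth proper model $\mathcal Y$ over $\mathcal O_{F,S}$, and --- crucially using the rigidity in the N\'eron property, so that the local models glue to this single $\mathcal Y$ --- every $X$ in the class lying over $Y$ extends to a smooth proper family of genus-$g_n$ curves $\mathcal X\to\mathcal Y$ over $\mathcal O_{F,S}$ with generic fiber $X\to Y$. As $X$ is the generic fiber of $\mathcal X$, the assignment $X\mapsto(\mathcal X\to\mathcal Y)$ is injective on isomorphism classes, so it remains to prove that $\mathcal M_{g_n}(\mathcal Y)$ --- the set of isomorphism classes of smooth proper genus-$g_n$ curves over $\mathcal Y$ --- is finite.

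This last step is the heart of the matter, and I expect it to be the main obstacle. The geometry is controlled by the generalized moduli theory of Kodaira fibrations --- the extension of Jost--Yau \cite{JY} to proper hyperbolic polycurves over regular schemes --- which I would use to split $\mathcal M_{g_n}(\mathcal Y)$ according to whether the family is isotrivial. An isotrivial family becomes constant after a connected finite \'etale cover of $\mathcal Y$ of bounded degree, hence is pinned down by: such a cover (finitely many, by the Hermite--Minkowski-type finiteness of fundamental groups of $\Z$-schemes of finite type), a twisting class in an $H^1$ with finite coefficients and restricted ramification (again finitely many), and the constant fiber itself (finitely many, by Faltings). For a non-isotrivial family, the moduli theory bounds the family of such fibrations over the fixed base $\mathcal Y$ and, through the rigidity it supplies, reduces the count to that over a one-dimensional base, where Shafarevich-type finiteness is available. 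Alternatively, one can bypass the split by pushing the whole of $\mathcal M_{g_n}(\mathcal Y)$ through the Torelli morphism $\mathcal M_{g_n}\to\mathcal A_{g_n}$ (finite fibers on points) into the set of principally polarized abelian schemes of relative dimension $g_n$ over $\mathcal Y$, and invoking the Shafarevich finiteness theorem for abelian schemes over a base of finite type over $\Z$, which follows from Faltings' theorem by a spreading-out argument (an abelian scheme over a normal base being determined by its generic fiber). Once $\mathcal M_{g_n}(\mathcal Y)$ is shown finite, the induction closes and Theorem~\ref{introShaf} follows.
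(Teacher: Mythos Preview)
Your proposal is correct and follows essentially the same route as the paper: induct on $n$, enlarge $S$ so that the extension/N\'eron theorems apply, extend each $(X,(\mathcal S))$ to a proper hyperbolic polycurve with parameterizing morphisms over the integral base, and then count such objects inductively. The paper organizes this as two separate lemmas --- first the extension to the integral base (Theorem~\ref{uniqueness}), then the finiteness of polycurves with parameterizing sequence over that base (Proposition~\ref{finiteness}) --- whereas you interleave them inside the induction, but the substance is identical.

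The one place where you overcomplicate matters is the finiteness of $\mathcal M_{g_n}(\mathcal Y)$. The isotrivial/non-isotrivial dichotomy, the Jost--Yau moduli analysis, and the Hermite--Minkowski counting are all unnecessary. Your ``alternative'' is exactly what the paper does and is the direct route: $\mathcal Y$ is a normal connected scheme flat of finite type over $\Spec\Z$, and Faltings' theorem (Shafarevich for abelian varieties over finitely generated fields of characteristic $0$, combined with Torelli) already gives finiteness of isomorphism classes of genus-$g_n$ curves with a smooth proper model over such a base --- this is precisely Proposition~\ref{ShafFal}. So the step you flag as ``the heart of the matter'' and ``the main obstacle'' is in fact immediate from Faltings once one realizes his result is stated over arbitrary finite-type $\Z$-bases, not just over rings of $S$-integers in a number field.
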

As a corollary of Theorem \ref{introShaf}, we give another proof of Sawada's finiteness theorem of isomorphism classes of proper hyperbolic polycurves with prescribed fundamental groups (cf.\,Corollary \ref{sawcor} and Remark \ref{Sawada}).

The main tool of the proof of Theorem \ref{introShaf} is the Shafarevich conjecture for proper hyperbolic curves over finitely generated fields of characteristic $0$, which was also proved by Faltings.
To use Faltings's result inductively, we study the structure of integral models of proper hyperbolic polycurves.
Precisely, we shall give the following result:

\begin{thm}[cf.\,Theorem \ref{uniqueness}]
Let $T$ be a connected Noetherian regular scheme, $K(T)$ the field of fractions of $T$, and $\mathfrak{X} \to T$ a proper smooth scheme.
Write $X$ for the scheme $\mathfrak{X}\times_{T}\Spec K(T)$.
Suppose that $X$ is a proper hyperbolic polycurve over $K(T)$.
Moreover, suppose that the residual characteristic of every point of $T$ of codimension $1$ is sufficiently large (see Theorem \ref{uniqueness} for the precise bound) or equal to $0$.
Then, for any sequence of parameterizing morphisms $(\mathcal{S})$ of $X\to \Spec K(T)$, there exists a unique sequence of parameterizing morphisms $(\mathfrak{S}')$ of a proper hyperbolic polycurve $\mathfrak{X}'\to T$ (up to canonical isomorphism) such that the base change of $(\mathfrak{X}', (\mathfrak{S}'))$ to $\Spec K(T)$ is isomorphic to $(X,(\mathcal{S}))$ (cf.\,Definition \ref{hyperbolicpoly}.2) and $\mathfrak{X}$ is canonically isomorphic to $\mathfrak{X}'$.
\label{introstr}
\end{thm}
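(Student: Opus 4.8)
The strategy is an induction on $n$, the relative dimension of $\mathfrak{X}$ over $T$, in which one extends the sequence $(\mathcal{S})$ one step at a time, starting from the bottom parameterizing morphism. If $n=1$, then $\mathfrak{X}\to T$ is a proper smooth morphism with geometrically connected fibres whose generic fibre is a proper smooth curve of genus $\geq 2$; since the genus is locally constant, every fibre is a proper hyperbolic curve, so $\mathfrak{X}\to T$ is itself a (length one) sequence of parameterizing morphisms, visibly the only one lifting $(\mathcal{S})$. Throughout I use that proper hyperbolic polycurves are geometrically integral and contain no rational curves (both by induction on the length of a parameterizing sequence), so that in particular all fibres of $\mathfrak{X}\to T$ are geometrically integral.

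Assume $n>1$ and write $(\mathcal{S})\colon X=X_n\to\cdots\to X_1\to\Spec K(T)$. The composite $X\to X_1$ exhibits $X_1$ as a proper hyperbolic curve over $K(T)$ dominated by $X$, and the first task is to extend it to a $T$-morphism $g\colon\mathfrak{X}\to\mathfrak{Y}$ with $\mathfrak{Y}\to T$ a proper smooth family of hyperbolic curves. For each point $t\in T$ of codimension $1$ the ring $\mathcal{O}_{T,t}$ is a discrete valuation ring whose residue characteristic is $0$ or sufficiently large, so the N\'eron property of proper smooth models over Dedekind schemes established above applies to $\mathfrak{X}\times_{T}\Spec\mathcal{O}_{T,t}$: it provides a proper smooth model of $X_1$ over $\mathcal{O}_{T,t}$ together with an extension of $X\to X_1$ over $\mathcal{O}_{T,t}$, and both are unique. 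Using the uniqueness of proper smooth models of a hyperbolic curve over a discrete valuation ring together with a spreading-out argument, $X_1$ extends to a proper smooth family of hyperbolic curves $\mathfrak{Y}_U\to U$ over an open subscheme $U\subseteq T$ with $\mathrm{codim}(T\setminus U)\geq 2$, and $X\to X_1$ extends to $\mathfrak{X}_U\to\mathfrak{Y}_U$. By the moduli theory of proper hyperbolic polycurves developed here (which generalizes the moduli theory of Kodaira fibrations of Jost--Yau \cite{JY}) together with Zariski--Nagata purity, $\mathfrak{Y}_U\to U$ extends to a proper smooth family of hyperbolic curves $\mathfrak{Y}\to T$; and since $\mathfrak{X}$ is regular and $\mathfrak{Y}\to T$ is proper with rational-curve-free fibres, the rational map $\mathfrak{X}\dashrightarrow\mathfrak{Y}$ defined on $U$ is in fact a $T$-morphism $g\colon\mathfrak{X}\to\mathfrak{Y}$ (a positive-dimensional fibre of the graph projection would be uniruled while lying in a proper hyperbolic curve). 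This $\mathfrak{Y}\to T$ will be the bottom arrow of the sought sequence $(\mathfrak{S}')$.

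Next one checks that $g$ is proper and smooth of relative dimension $n-1$. Properness is clear, and as the fibres of $\mathfrak{X}\to T$ are geometrically connected and those of $\mathfrak{Y}\to T$ are curves, $g$ is surjective with all fibres of dimension $n-1$, hence flat by miracle flatness ($\mathfrak{X}$ is Cohen--Macaulay, $\mathfrak{Y}$ is regular). For smoothness one uses the Dedekind case once more: over $\mathcal{O}_{T,t}$ (for $t$ of codimension $1$) the extended sequence of $\mathfrak{X}\times_T\Spec\mathcal{O}_{T,t}$ has a smooth bottom arrow to a proper smooth model of $X_1$, which by uniqueness is $\mathfrak{Y}\times_T\Spec\mathcal{O}_{T,t}$ compatibly with $g$; hence $g$ is smooth over $\mathfrak{Y}_U$, whose complement in $\mathfrak{Y}$ has codimension $\geq 2$. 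Now $\mathfrak{Y}_U$ is a connected Noetherian regular scheme with the same type of bound on residue characteristics in codimension $1$, $g\colon\mathfrak{X}_U\to\mathfrak{Y}_U$ is proper smooth, and its generic fibre is the proper hyperbolic polycurve $X\times_{X_1}\Spec K(X_1)$ of dimension $n-1$ with the sequence $(\mathcal{S}')$ induced by $(\mathcal{S})$, so the inductive hypothesis lifts $(\mathcal{S}')$ uniquely to a sequence of parameterizing morphisms of $\mathfrak{X}_U\to\mathfrak{Y}_U$. Extending the associated classifying morphism $\mathfrak{Y}_U\to\mathcal{M}$ into the moduli space of such data over all of $\mathfrak{Y}$ by purity — the complement of $\mathfrak{Y}_U$ has codimension $\geq 2$ and the family does not degenerate along the codimension $1$ points of $\mathfrak{Y}$, which all lie in $\mathfrak{Y}_U$ — and pulling back the universal family yields a sequence of parameterizing morphisms $(\mathfrak{S}')$ of a proper smooth hyperbolic polycurve $\mathfrak{X}'\to\mathfrak{Y}\to T$ lifting $(\mathcal{S})$. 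The canonical isomorphism $\mathfrak{X}\cong\mathfrak{X}'$ compatible with $g$ is obtained by extending the identity over $U$ to $T$-morphisms $\mathfrak{X}\to\mathfrak{X}'$ and $\mathfrak{X}'\to\mathfrak{X}$ (again using regularity of $\mathfrak{X},\mathfrak{X}'$ and absence of rational curves) whose composites are the identity by density of the generic fibre; uniqueness of $(\mathfrak{S}')$ follows from uniqueness of the proper smooth model $\mathfrak{Y}$ of $X_1$ over $T$ and the inductive uniqueness over $\mathfrak{Y}_U$.

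The main obstacle is the extension across codimension $\geq 2$: passing from a proper smooth hyperbolic (poly)curve family over $T\setminus(\text{codimension}\geq 2)$ to one over $T$. This is exactly where the moduli theory of proper hyperbolic polycurves (the generalization of \cite{JY}) is indispensable: one needs to know that the relevant moduli space becomes, after adding suitable rigidifying data, a quasi-projective scheme admitting a compactification with boundary a divisor, so that Zariski--Nagata purity applies to the classifying morphism. Establishing this, and then checking the compatibility of all the extensions produced in the induction, is the technical core; the remaining steps are routine, relying only on separatedness and on the absence of rational curves on proper hyperbolic polycurves.
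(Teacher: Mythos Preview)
Your overall strategy is sound and uses the same ingredients as the paper, but the organization differs and there is one genuine gap in the justification.

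\textbf{Comparison with the paper.} The paper separates the two inputs cleanly. First, Theorem \ref{polypurity} (``polypurity'') is established: a sequence of parameterizing morphisms over $K(Z)$ extends to $Z$ iff it extends over every codimension-$1$ local ring, and any proper smooth model of the top scheme is then canonically the top of the extended sequence. This is proved by induction on $n$ using only Moret-Bailly's purity theorem for \emph{curves} (Proposition \ref{MB}) at each step. Second, the proof of Theorem \ref{uniqueness} is one line: by polypurity reduce to $Z=\Spec O_{K(Z)}$ a DVR, and then cite \cite[Theorems 1.2.1, 1.2.3, 1.3]{Nag}, which is exactly where the residue-characteristic bound is used. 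Your induction interleaves these two pieces, re-deriving the purity argument inside each inductive step; this is fine, but obscures that the characteristic hypothesis is needed \emph{only} for the DVR case.

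\textbf{The gap.} In your inductive step you extend the $(n-1)$-dimensional polycurve from $\mathfrak{Y}_U$ to $\mathfrak{Y}$ by ``extending the classifying morphism $\mathfrak{Y}_U\to\mathcal{M}$ into the moduli space of such data'', and in your final paragraph you identify as the ``technical core'' the construction of a quasi-projective moduli of proper hyperbolic polycurves with divisorial boundary. No such moduli space is constructed in the paper, nor is one needed: Theorem \ref{polypurity}.1 handles this extension by induction on the length of the sequence, using at each stage only $\mathcal{M}_g$ and its Deligne--Mumford compactification (i.e.\ Proposition \ref{MB}). You should invoke that directly rather than an unestablished polycurve moduli.

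\textbf{A second imprecision.} You write that ``the N\'eron property of proper smooth models over Dedekind schemes established above'' yields, over each $\mathcal{O}_{T,t}$, both a smooth model of $X_1$ and an extension of $X\to X_1$ that you later use as \emph{smooth}. The N\'eron results in the paper (Propositions \ref{neronproperty} and \ref{polyneron}) give existence of $\mathfrak{Y}_{\mathcal{O}_{T,t}}$ and of the extended morphism, but \emph{not} its smoothness; that smoothness is precisely the content of \cite{Nag} and is the only place the bound on residue characteristics enters. Your sentence ``the extended sequence of $\mathfrak{X}\times_T\Spec\mathcal{O}_{T,t}$ has a smooth bottom arrow'' is asserting the DVR case of the theorem itself, so the citation to \cite{Nag} should be made explicit there.
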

Theorem \ref{introstr} is a sort of generalization of the result of Jost and Yau \cite{JY}.
(In \cite{JY}, proper hyperbolic polycurves of relative dimension $2$ over complex manifolds are treated.)
We prove Theorem \ref{introstr} by applying the results on N\'eron models of hyperbolic curves by Liu-Tong \cite{Liu} and the purity of proper hyperbolic polycurves over regular schemes (cf.\,Theorem \ref{polypurity}).

The content of each section is as follows: 
In Section \ref{onreg}, we give the precise definition of a proper hyperbolic polycurve and the proof of the purity of proper hyperbolic polycurves over regular schemes.
In Section \ref{neronsec}, we discuss structures of smooth models of a proper hyperbolic polycurve over a Dedekind scheme.
In Section \ref{sectshaf}, we give the proof of the Shafarevich conjecture for proper hyperbolic polycurves by using the results of Section \ref{neronsec} and Faltings's result.
In Section \ref{sectpresc}, we give another proof of the Sawada's finiteness theorem for proper polycurves in the case where their coefficient fields are finitely generated over $\Q$.

\subsection*{acknowledgement}
The first author was supported by Iwanami Fujukai Foundation.
%%Liuさんへの謝辞を追加しました。
The second author would like to thank Qing Liu for helpful comments on Proposition \ref{neronproperty}.
The second author is supported by the FMSP program at the University of Tokyo.
This work was supported by the Research Institute for Mathematical Sciences, a Joint Usage/Research Center located in Kyoto University.

\section{Proper hyperbolic polycurves over regular schemes}
In this section, we discuss properties of proper hyperbolic polycurves over regular schemes (cf.\,Theorem \ref{polypurity}).
We start with the definition of a proper hyperbolic polycurve.

\begin{dfn}
Let $S$ be a scheme and $X$ a scheme over $S$.
\begin{enumerate}
\item We shall say that $X$ is a {\it proper hyperbolic curve} over $S$ if
the structure morphism $X \rightarrow S$ is proper, smooth, and of relative dimension $1$ with geometrically connected fibers of genus $g \geq 2$.
\item We shall say that $X$ is a {\it proper hyperbolic polycurve} (of relative dimension $n$) over $S$ if
there exists a (not necessarily unique) factorization
\begin{equation*}
(\mathcal{S}) : X = X_{n} \rightarrow X_{n-1} \rightarrow \ldots \rightarrow X_{1} \rightarrow X_{0} = S
\end{equation*}
of the structure morphism $X \rightarrow S$ such that, for each $i \in \{ 1, \ldots ,n \}$, $X_{i} \rightarrow X_{i-1}$ is a proper hyperbolic curve.
We refer to the above factorization of the morphism $X \rightarrow S$ as a {\it sequence of parametrizing morphisms}.
Let $g_{i}$ be the genus of the curve $X_{i} \to X_{i-1}$ for each $1 \leq i \leq n$.
We write $g_{\mathcal{S}} \deq \underset{1 \leq i \leq n}{\mathrm{max}} g_{i}$.
We also write $g_{X} \deq \underset{\mathcal{S}}{\mathrm{min}}\, g_{\mathcal{S}}$, where $\mathcal{S}$ ranges over the sequences of parametrizing morphisms of $X \to S$.
In the case where we consider a pair of a proper hyperbolic polycurve $X\to S$ and a sequence of parametrizing morphisms $(\mathcal{S})$ of $X \to S$, we write $(X, (\mathcal{S}))$.
We refer to such a pair as a {\it proper hyperbolic polycurve with a sequence of parametrizing morphisms}.
We shall say that two proper hyperbolic polycurves (over $S$) with a sequence of parametrizing morphisms $(X, (\mathcal{S}))$ and $(X', (\mathcal{S'}))$ are isomorphic if there exists an $S$-isomorphism between proper hyperbolic polycurves of relative dimension $i$ over $S$ defined by $(\mathcal{S})$ and $(\mathcal{S}')$ for each $1 \leq i \leq n$ such that these isomorphisms are compatible with the sequence of parametrizing morphisms $(\mathcal{S})$ and $(\mathcal{S}')$.

\item Let $X$ be a proper hyperbolic polycurve of relative dimension $n$ over $S$.
Let
\begin{equation*}
X = X_{n} \rightarrow X_{n-1} \rightarrow \ldots \rightarrow X_{1} \rightarrow X_{0} = S
\end{equation*}
be a sequence of parametrizing morphisms of $X \rightarrow S$.
Write $g_{i}$ for the genus of the proper hyperbolic curve $X_{i} \to X_{i-1}$ for each $1\leq i \leq n$.
We refer to the nonzero integer
$$\chi(X)=\underset{1\leq i \leq n}{\prod}(2-2g_{i})$$
as {\it the Euler-Poincar\'e characteristic} of the proper hyperbolic polycurve $X \to S$.
Note that $\chi(X)$ does not depend on the choice of a sequence of parametrizing morphisms of the proper hyperbolic polycurve $X \rightarrow S$ by Lemma \ref{chi}.
It holds that $2^{n}$ divides $\chi(X)$.
Note that we cannot determine $n$ from $\chi(X)$.
\end{enumerate}
\label{hyperbolicpoly}
\end{dfn}
\begin{lem}
Let $X$ be a proper hyperbolic polycurve of relative dimension $n$ over $S$.
$\chi(X)$ does not depend on the choice of a sequence of parametrizing morphisms of $X \rightarrow S$.
\label{chi}
\end{lem}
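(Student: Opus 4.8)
The plan is to reduce to the case of a proper hyperbolic polycurve over an algebraically closed field and then to compute the alternating sum of its $\ell$-adic Betti numbers, which is manifestly an invariant of $X$ alone. We may assume $S$ is nonempty. Choose a point $s\in S$ and a geometric point $\bar{s}$ lying over $s$. Base change along $\bar{s}\to S$ carries any sequence of parametrizing morphisms $(\mathcal{S})\colon X=X_{n}\to\cdots\to X_{0}=S$ to a sequence of parametrizing morphisms $X_{\bar{s}}=(X_{n})_{\bar{s}}\to\cdots\to\Spec\kappa(\bar{s})$ of $X_{\bar{s}}\deq X\times_{S}\bar{s}$ with the same genera $g_{1},\dots,g_{n}$, since properness, smoothness, relative dimension, geometric connectedness of the fibers, and the genus of a smooth proper family of curves are all insensitive to base change. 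Hence it suffices to treat the case $S=\Spec k$ with $k$ algebraically closed, where moreover each $X_{i}$ is a connected proper smooth $k$-variety of dimension $i$.

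Fix a prime $\ell$ invertible in $k$ and, for a proper $k$-variety $Y$, write $\chi_{\ell}(Y)\deq\sum_{i}(-1)^{i}\dim_{\mathbb{Q}_{\ell}}H^{i}(Y,\mathbb{Q}_{\ell})$ for its $\ell$-adic Euler characteristic, and $\chi_{\ell}(Y,\mathcal{F})$ for the analogous quantity with coefficients in a lisse $\mathbb{Q}_{\ell}$-sheaf $\mathcal{F}$. We shall show that $\chi_{\ell}(X)=\prod_{i=1}^{n}(2-2g_{i})$ for an arbitrary sequence $(\mathcal{S})$, which proves the lemma since the left-hand side depends only on $X$. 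We induct on $n$, the case $n=0$ being $\chi_{\ell}(\Spec k)=1$. For the inductive step, let $f\colon X_{n}\to X_{n-1}$ be the last morphism of $(\mathcal{S})$. By smooth and proper base change, $R^{q}f_{\ast}\mathbb{Q}_{\ell}$ is a lisse $\mathbb{Q}_{\ell}$-sheaf on $X_{n-1}$ whose rank equals the $q$-th $\ell$-adic Betti number of a proper smooth curve of genus $g_{n}$, namely $1,\ 2g_{n},\ 1$ for $q=0,1,2$ and $0$ for $q>2$. The Leray spectral sequence $E_{2}^{p,q}=H^{p}(X_{n-1},R^{q}f_{\ast}\mathbb{Q}_{\ell})\Rightarrow H^{p+q}(X_{n},\mathbb{Q}_{\ell})$, together with additivity of the Euler characteristic along a bounded spectral sequence of finite-dimensional terms, gives
\[
\chi_{\ell}(X_{n})=\sum_{q}(-1)^{q}\,\chi_{\ell}\bigl(X_{n-1},R^{q}f_{\ast}\mathbb{Q}_{\ell}\bigr).
\]
Finally one uses the fact that for a lisse $\mathbb{Q}_{\ell}$-sheaf $\mathcal{F}$ on a connected proper smooth $k$-variety $Y$ one has $\chi_{\ell}(Y,\mathcal{F})=\mathrm{rk}(\mathcal{F})\cdot\chi_{\ell}(Y)$; granting this, $\chi_{\ell}(X_{n})=(1-2g_{n}+1)\,\chi_{\ell}(X_{n-1})=(2-2g_{n})\,\chi_{\ell}(X_{n-1})$, and the induction hypothesis concludes.

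The crux, and the only step I expect to require real work, is the last fact: that the $\ell$-adic Euler characteristic of a lisse sheaf on a proper smooth variety equals its rank times the Euler characteristic of the constant sheaf. In characteristic $0$ it is immediate via comparison with Betti cohomology, where it amounts to the statement that a local system and the constant sheaf of the same rank on a compact manifold have proportional Euler characteristics. In general it follows from the Grothendieck--Ogg--Shafarevich formula and an induction on dimension through a Lefschetz pencil, the essential point being that a lisse sheaf on a proper smooth variety is everywhere unramified, so no Swan conductor contributions arise. Alternatively, in characteristic $0$ one can avoid $\ell$-adic cohomology altogether: spreading the polycurve and its two sequences of parametrizing morphisms out over a finitely generated $\mathbb{Q}$-algebra and base changing through a $\mathbb{C}$-point, one reduces to $k=\mathbb{C}$, where by Ehresmann's theorem each $X_{i}\to X_{i-1}$ is a locally trivial $C^{\infty}$ fiber bundle with fiber a compact orientable surface of genus $g_{i}$; multiplicativity of the topological Euler characteristic in fiber bundles over a connected compact base then gives $\chi_{\mathrm{top}}(X(\mathbb{C}))=\prod_{i=1}^{n}(2-2g_{i})$, once more manifestly independent of the chosen sequence.
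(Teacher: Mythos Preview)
Your proof is correct and follows essentially the same route as the paper: reduce to $S=\Spec k$ with $k$ algebraically closed, then identify $\prod_i(2-2g_i)$ with the $\ell$-adic Euler characteristic of $X$ via the Leray spectral sequence for $X_n\to X_{n-1}$. The paper uses $\F_\ell$-coefficients rather than $\Q_\ell$, and for the ``crux'' you isolate---that $\chi(Y,\mathcal{F})=\mathrm{rk}(\mathcal{F})\cdot\chi(Y)$ for a lisse sheaf on a proper smooth variety---it simply cites \cite[Corollaire 2.11]{Ill}, which is exactly the Deligne result you sketch via Grothendieck--Ogg--Shafarevich and Lefschetz pencils.
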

\begin{proof}
Let
\begin{equation*}
X = X_{n} \rightarrow X_{n-1} \rightarrow \ldots \rightarrow X_{1} \rightarrow X_{0} = S
\end{equation*}
be a sequence of parametrizing morphisms of $X \rightarrow S$.
Write $g_{i}$ for the genus of the proper hyperbolic curve $X_{i} \to X_{i-1}$ for each $1\leq i \leq n$.
We may assume that $S$ is the spectrum of an algebraically closed field $k$.
Let $l\neq p$ be a prime number and $\chi(X, \F_{l})$ the Euler characteristic of the trivial \'etale sheaf $\F_{l}$ on $X$.
It suffices to show that $\underset{1\leq i \leq n}{\prod}(2-2g_{i})=\chi(X, \F_{l})$.
By induction on $n$ and the Leray spectral sequence for $X_{n}\to X_{n-1}$, one can verify this by using \cite[COROLLAIRE 2.11]{Ill}.
\end{proof}
\begin{rem}
In the case where $S$ has a point $s$ whose residual characteristic is $0$, Lemma \ref{chi} follows immediately from the fact that the Euler-Poincar\'e characteristic of a proper hyperbolic polycurve $X\to S$ can be determined by the \'etale fundamental groups of the scheme $X\times_{S}\overline{s}$.
Here, $\overline{s}$ is a geometric point of $X$ over $s$.
Indeed, let $l$ be a prime number.
Write $\Delta$ for the \'etale fundamental group of the scheme $X\times_{S}\overline{s}$, $H^{i}(\Delta, \F_{l})$ for the $i$-th cohomology group of the trivial $\Delta$-module $\F_{l}$, and $\chi(\Delta, \F_{l})$ for the Euler characteristic 
%%交代和になっていなかったので修正しました
$\underset{0\leq i \leq \infty}{\sum}
(-1)^{i}\mathrm{dim}_{\F_{l}}H^{i}(\Delta, \F_{l})$.
Then one can verify that $\chi(X)=\chi(\Delta, \F_{l})$.
\label{fundEuler}
\end{rem}
In this paper, we shall say that a scheme $T$ is {\it Dedekind} if $T$ is a $1$-dimensional connected Noetherian normal separated scheme.
\begin{dfn}
Let $T$ be a Dedekind scheme, $K(T)$ the function field of $T$, $\eta$ the generic point of $T$, and $X \to \mathrm{Spec}\,K(T)$ a proper smooth morphism with geometrically connected fibers.
\begin{enumerate}
\item We shall say that $X$ has good reduction if there exists a proper smooth $T$-scheme $\mathfrak{X}$ 
whose generic fiber $\mathfrak{X} _{\eta}$ is isomorphic to $X$ over $K(T)$. 
We refer to such $\mathfrak{X}$ as a {\it smooth model} of $X$.
\item Suppose that $X \to \Spec K(T)$ is a proper hyperbolic polycurve.
Let
\begin{equation*}
(\mathcal{S}) : X = X_{n} \rightarrow X_{n-1} \rightarrow \ldots \rightarrow X_{1} \rightarrow X_{0} = \Spec K(T)
\label{defseq}
\end{equation*}
be a sequence of parametrizing morphism of $X \to \Spec K(T)$.
We shall say that $X$ {\it has good reduction with respect to} $(\mathcal{S})$ if there exist a proper hyperbolic polycurve $\mathfrak{X} \to T$ and a sequence of parametrizing morphisms
$$(\mathcal{S}') : \mathfrak{X} = \mathfrak{X}_{n} \rightarrow \mathfrak{X}_{n-1} \rightarrow \ldots \rightarrow \mathfrak{X}_{1} \to \mathfrak{X}_{0} = T$$
of $\mathfrak{X} \to T$ such that the proper hyperbolic polycurve with a sequence of parametrizing morphisms defined by the base change of the sequence $(\mathcal{S}')$ to $\Spec K(T)$ is isomorphic to $(X, (\mathcal{S}))$.
\item Let $\mathfrak{X}$ be a separated, smooth, and of finite type scheme over $T$ whose generic fiber is isomorphic to $X$ over $K(T)$.
We shall say that $\mathfrak{X}$ is the N\'eron model of $X$ if the following property, called {\it N\'eron mapping property}, is satisfied:\\
for any smooth scheme $\mathfrak{Y}$ over $T$, the canonical map
$$\mathrm{Mor}_{T}(\mathfrak{Y} , \mathfrak{X}) \to \mathrm{Mor}_{K(T)}(\mathfrak{Y}\times_{T} \Spec K(T), X)$$
is a bijection.
Here, $\mathrm{Mor}_{T}(\mathfrak{Y} , \mathfrak{X})$ is the set of morphisms from $\mathfrak{Y}$ to $\mathfrak{X}$ over $T$, and $\mathrm{Mor}_{K(T)}(\mathfrak{Y}\times_{T} \Spec K(T), X)$ is the set of morphisms from $\mathfrak{Y}\times_{T} \Spec K(T)$ to $X$ over $K(T)$.
\end{enumerate}
\label{reddef}
\end{dfn}

\begin{rem}
If a smooth model of a proper hyperbolic curve exists, it is unique up to canonical isomorphism (cf.\,\cite{DM}).
This also follows from Lemma \ref{moduliunique} or Proposition \ref{neronproperty}.2.
\label{hyp-uniq}
\end{rem}

\begin{lem}
Let $S$ be an irreducible normal scheme and $K(S)$ the function field of $S$.
Let $C_{1}$ and $C_{2}$ be proper hyperbolic curves over $S$, and $\phi$ an isomorphism $C_{1}\times_{S}\Spec K(S) \cong C_{2}\times_{S}\Spec K(S)$ over $K(S)$.
Then there exists a unique isomorphism $\Phi: C_{1} \cong C_{2}$ over $S$ whose base change to $\Spec K(S)$ coincides with $\phi$.
\label{moduliunique}
\end{lem}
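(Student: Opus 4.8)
The plan is to deduce the existence of $\Phi$ from the finiteness over $S$ of the scheme of isomorphisms between $C_1$ and $C_2$, together with an integral-closure argument; uniqueness is essentially formal. For uniqueness and locality: since $S$ is irreducible and normal and $C_1\to S$ is smooth with geometrically connected fibres, $C_1$ is irreducible and normal and its generic point lies over the generic point of $S$, so $C_1\times_S\Spec K(S)$ is dense in $C_1$ (indeed scheme-theoretically dense, as $C_1$ is reduced). As $C_2\to S$ is separated, two $S$-morphisms $C_1\to C_2$ agreeing on this dense subscheme coincide; hence $\Phi$ is unique if it exists, and its construction may be carried out on an affine open cover of $S$ and glued.

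For existence, set $I:=\mathrm{Isom}_S(C_1,C_2)$. Since $C_1\to S$ is proper, flat and finitely presented and $C_2\to S$ is projective, $I$ is representable by a separated $S$-scheme of finite type (reducing to the locally Noetherian case by a limit argument if necessary). The morphism $I\to S$ is unramified: an infinitesimal deformation of an isomorphism $C_{1,s}\xrightarrow{\sim}C_{2,s}$ fixing source and target is classified by $H^0(C_{1,s},T_{C_{1,s}/\kappa(s)})$, which vanishes because the fibres have genus $\geq 2$; hence $\Omega_{I/S}=0$ and $I\to S$ is quasi-finite. It is also proper: by the valuative criterion, reduced in the usual way to the case $S=\Spec R$ with $R$ a discrete valuation ring, an isomorphism between the generic fibres of $C_1$ and $C_2$ is an isomorphism between two smooth proper models over $R$ of a single proper hyperbolic curve, and such an isomorphism extends over $R$ by the uniqueness of smooth proper models of a proper hyperbolic curve (\cite{DM}; cf.\ Remark \ref{hyp-uniq}). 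A separated, quasi-finite, proper morphism is finite, so $I\to S$ is finite.

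To conclude: since $I\to S$ is finite, $I$ is the relative spectrum over $S$ of a quasi-coherent $\mathcal{O}_S$-algebra $\mathcal{A}$ that is finite as an $\mathcal{O}_S$-module. The isomorphism $\phi$ is a point of $I$ lying over $\Spec K(S)$, i.e.\ a homomorphism of $\mathcal{O}_S$-algebras $\mathcal{A}\to K(S)$ into the constant sheaf $K(S)$. Its image is an $\mathcal{O}_S$-subalgebra of $K(S)$ that is finite over $\mathcal{O}_S$, hence integral over $\mathcal{O}_S$; by normality of $S$ this image lies in $\mathcal{O}_S$. Therefore $\mathcal{A}\to K(S)$ factors through $\mathcal{O}_S\hookrightarrow K(S)$, which is an $S$-point of $I$, that is, an $S$-isomorphism $\Phi:C_1\xrightarrow{\sim}C_2$; by construction its base change to $\Spec K(S)$ is $\phi$.

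The one nonformal ingredient, and the step I expect to be the main obstacle, is the properness of $I\to S$ — equivalently the extension of an isomorphism across a discrete valuation ring. This is exactly where hyperbolicity (genus $\geq 2$) is used in an essential way, via the uniqueness of smooth proper models; merely extending $\phi$ over the codimension-one points of $S$ would not suffice, since the naive candidate for $\Phi$ — the closure of the graph of $\phi$ in $C_1\times_S C_2$ — could otherwise fail to be an isomorphism onto $C_1$, differing from it, say, by a blow-up along a locus of codimension $\geq 2$.
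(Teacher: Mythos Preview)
Your proof is correct and follows a genuinely different route from the paper's. The paper argues via the moduli stack: after Zariski-localizing so that some prime $l$ is invertible, it uses that $\mathcal{M}_g$ over $\Z[1/l]$ admits a finite \'etale cover $M\to\mathcal{M}_g$ by a scheme, identifies the normalization $S'$ of $S$ in $L=K(S)\times_{\mathcal{M}_g}M$ with $S\times_{\mathcal{M}_g}M$, invokes separatedness of $\mathcal{M}_g$ to extend $\phi$ to an isomorphism over $S'$, and then descends back to $S$. You instead work directly with the Isom scheme $I=\mathrm{Isom}_S(C_1,C_2)$: you show $I\to S$ is finite by combining unramifiedness (vanishing of $H^0$ of the tangent bundle in genus $\geq 2$) with properness (uniqueness of smooth proper models over a DVR, which you correctly source from \cite{DM} rather than from the lemma itself), and then extend the generic section via the integral-closure argument using normality of $S$. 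Both arguments ultimately rest on the same input---properness of the diagonal of $\mathcal{M}_g$, i.e.\ finiteness of Isom schemes for genus $\geq 2$ curves---but yours avoids the stack language and the passage to an \'etale cover, at the cost of invoking representability of the Isom functor; the paper's version keeps the moduli-theoretic picture front and center. One efficiency: the finiteness of $I\to S$ is exactly \cite[Theorem~1.11]{DM}, so steps~3--5 of your argument could be replaced by a direct citation.
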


\begin{proof}
Lemma \ref{moduliunique} follows from the argument given in the discussion entitled 
%%文字化けを直しました
``Curves''
in \cite[\S 0]{Moch}.
For the convenience of the reader, we give the proof here.
We may assume that a prime $l$ is invertible on $S$ (which, by Zariski localization, we may assume without loss of generality).
Let $g$ be the genus of $C_{i}$.
Then the moduli stack $\mathcal{M}_{g}$ of proper smooth curve of genus $g\,(\geq 2)$ over $\Spec \Z[1/l]$ has a finite \'etale covering from a scheme.

Let $C \to S$ be a proper hyperbolic curve of genus $g$, $c: S\to \mathcal{M}_{g}$ the $1$-morphism defined by $C$, and $\iota: \Spec K(S) \to S$ the natural morphism.
Let $M \to \mathcal{M}_{g}$ be a finite \'etale covering from a scheme, $\Spec L$ the scheme representing $\Spec K(S) \times_{\mathcal{M}_{g}}M$.
Write $\phi_{L}$ for the base change of $\phi$ to $\Spec L$.
Let $S'$ be the normalization of $S$ in $\Spec L$.
Then the scheme $S'$ represents $S\times_{\mathcal{M}_{g}}M$.
Since $\mathcal{M}_{g}$ is separated over $\Spec \Z[1/l]$, there exists a unique isomorphism
$$\Phi_{S'}: C_{1}\times_{S}S' \cong C_{2}\times_{S}S'$$
whose base change to $\Spec L$ coincides with $\phi_{L}$.
Hence, the desired morphism $\Phi$ uniquely exists.
\end{proof}

\begin{prop}[cf.\,\cite{Liu}]
Let $T$, $K(T)$, and $X$ be as in Definition \ref{reddef}.
\begin{enumerate}
\item Let $\mathfrak{X}$ be a smooth model of $X$.
Suppose that each closed fiber of the morphism $\mathfrak{X}\to T$ contains no rational curves.
Then $\mathfrak{X}$ is the N\'eron model of $X$ (\cite[Proposition 4.13]{Liu}).
\item Suppose that $X$ is a proper hyperbolic curve which has good reduction.
Then a smooth model of $X$ (cf.\,Remark \ref{hyp-uniq}) is the N\'eron model of $X$.
\item Let $Y \to \Spec K(T)$ be a proper smooth morphism with geometrically connected fibers.
Suppose that $X$ is a proper hyperbolic curve and that $Y$ has good reduction.
Moreover, suppose that there exists a $K(T)$-morphism from $Y$ to $X$.
Then $X$ has good reduction (\cite[Corollary 4.7]{Liu}).
\item If $X$ has a proper N\'eron model, any smooth model of $X$ is canonically isomorphic to the N\'eron model.
\end{enumerate}
\label{neronproperty}
\end{prop}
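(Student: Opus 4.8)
The plan is to handle the four assertions in turn. Parts (1) and (3) are precisely the statements \cite[Proposition 4.13]{Liu} and \cite[Corollary 4.7]{Liu}, so for those there is nothing to do beyond quoting \cite{Liu}; the content is in (2) and (4).

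For (2) I would reduce to (1). Let $\mathfrak{X}$ be the (unique, by Remark \ref{hyp-uniq}) smooth model of the proper hyperbolic curve $X$. Since $\mathfrak{X}\to T$ is proper, smooth (hence flat) with geometrically connected generic fiber and $T$ is normal, pushing forward $\mathcal{O}_{\mathfrak{X}}$ yields $\mathcal{O}_T$ (a finite, torsion-free, generically trivial $\mathcal{O}_T$-algebra over a normal base), so every fiber of $\mathfrak{X}\to T$ is geometrically connected; moreover the arithmetic genus is constant in the flat family, so every closed fiber is a smooth proper geometrically connected curve of genus $g\ge 2$. Such a curve contains no rational curve (there is no non-constant morphism $\mathbb{P}^1\to C$ with $g(C)\ge 1$), so the hypothesis of (1) is met and $\mathfrak{X}$ is the N\'eron model of $X$.

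For (4), let $\mathfrak{N}$ be the proper N\'eron model and $\mathfrak{X}$ an arbitrary smooth model of $X$. Applying the N\'eron mapping property to the smooth $T$-scheme $\mathfrak{X}$ and to $\mathrm{id}_X$ produces a unique $T$-morphism $f\colon\mathfrak{X}\to\mathfrak{N}$ restricting to the identity on generic fibers; this $f$ will be the canonical isomorphism, so it remains only to prove that $f$ is an isomorphism. As in (2), both $\mathfrak{X}$ and $\mathfrak{N}$ are integral and regular (smooth over the regular $T$), both are proper over $T$, and all their fibers over $T$ are geometrically connected; thus $f$ is proper and birational, it is surjective (its image is closed and contains the generic point of $\mathfrak{N}$), and $f(\mathfrak{X}_t)=\mathfrak{N}_t$ for every closed point $t\in T$ (because $f$ lies over $T$, so $\mathfrak{N}_t$ can only be met from $\mathfrak{X}_t$). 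Now suppose $f$ were not an isomorphism. Since $\mathfrak{N}$ is normal, $f$ is an isomorphism over the points of $\mathfrak{N}$ of codimension $\le 1$; hence, by purity of the exceptional locus (valid since $\mathfrak{N}$ is regular), there is a prime divisor $E\subseteq\mathfrak{X}$ contracted by $f$, i.e.\ with $\dim f(E)\le\dim E-1$. Such an $E$ cannot dominate $T$: its generic fiber would be a divisor of $X$, and since $f$ is the identity on generic fibers, $\overline{f(E)}$ would contain the closure of that divisor and so have dimension $\dim E$. Hence $E$ lies in a closed fiber $\mathfrak{X}_t$; since $\dim E=\dim\mathfrak{X}_t$ and $\mathfrak{X}_t$ is integral, $E=\mathfrak{X}_t$, so $f(E)=f(\mathfrak{X}_t)=\mathfrak{N}_t$ has dimension $\dim\mathfrak{X}_t=\dim E$, contradicting that $E$ is contracted. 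Therefore $f$ is an isomorphism.

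The main obstacle is this last step of (4): the N\'eron mapping property only yields a morphism in the direction $\mathfrak{X}\to\mathfrak{N}$, and turning it into an isomorphism requires the geometric input about contracted divisors together with purity of the exceptional locus. Properness of $\mathfrak{N}$ enters twice and is essential here: once to guarantee that $\mathfrak{N}$ is integral, so that $f$ is genuinely birational onto an irreducible target, and once to guarantee $f(\mathfrak{X}_t)=\mathfrak{N}_t$, which is precisely what makes the dimension count produce a contradiction.
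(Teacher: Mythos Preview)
Your argument is correct. For (4), you and the paper do the same thing: the paper simply invokes van der Waerden's purity theorem (\cite[Corollaire (21.12.16)]{EGA}), while you spell out in detail how purity of the exceptional locus, combined with the irreducibility and equidimensionality of the closed fibers, forces the canonical map $\mathfrak{X}\to\mathfrak{N}$ to be an isomorphism. For (2), the paper's primary proof is slightly different---it cites \cite[Theorem 1.1]{Liu} together with the fact that the smooth model is the minimal regular model---but immediately afterwards the paper remarks that (2) also follows from (1), which is exactly the route you take; your verification that the closed fibers are smooth genus $g\ge 2$ curves (hence contain no rational curves) is the content of that reduction.
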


\begin{proof}
We only show assertion 2 and 4.
Assertion 2 follows from \cite[Theorem 1.1]{Liu} and the fact that a smooth model of $X$ is the minimal regular model of $X$.
Assertion 4 follows from van der Waerden's purity theorem (see \cite[Corollaire (21.12.16)]{EGA} for a more general statement).
\end{proof}

Note that Proposition \ref{neronproperty}.2 follows from Proposition \ref{neronproperty}.1.
Also, one can show Proposition \ref{neronproperty}.2 by the N\'eron mapping property of the N\'eron model of the Jacobian variety of $X$ (after replacing $T$ by the strict henselization of each closed point of $T$).

\begin{prop}
Let $T$ and $K(T)$ be as in Definition \ref{reddef}.
Let
$$\mathfrak{X}=\mathfrak{X}_{n} \to \mathfrak{X}_{n-1} \to \ldots \to \mathfrak{X}_{0} = T$$
be a proper hyperbolic polycurve.
Then $\mathfrak{X}$ is the N\'eron model of the scheme $\mathfrak{X} \times_{T} \Spec K(T)$.
\label{polyneron}
\end{prop}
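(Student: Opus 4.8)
The plan is to deduce this from Proposition \ref{neronproperty}.1, so that the work reduces to verifying its hypotheses for $\mathfrak{X}\to T$ and its generic fibre. First I would note that $\mathfrak{X}\to T$, being a composite of proper smooth morphisms of relative dimension $1$ with geometrically connected fibres, is itself proper and smooth with geometrically connected fibres; in particular $X\deq \mathfrak{X}\times_T \Spec K(T)$ is a proper hyperbolic polycurve over $K(T)$ (the base changes of the morphisms $\mathfrak{X}_i\to\mathfrak{X}_{i-1}$ along $\Spec K(T)\to T$ supply a sequence of parametrizing morphisms), and $\mathfrak{X}$ is a smooth model of $X$ in the sense of Definition \ref{reddef}.1.

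The only real content is to check that every closed fibre of $\mathfrak{X}\to T$ contains no rational curves. Fix a closed point $s\in T$. Base-changing the given sequence of parametrizing morphisms along $\Spec k(s)\to T$, and then along $\Spec\overline{k(s)}\to\Spec k(s)$, exhibits $\mathfrak{X}_{\overline{s}}$ as a proper hyperbolic polycurve over the algebraically closed field $\overline{k(s)}$, since properness, smoothness, relative dimension, geometric connectedness of fibres, and the genus are all preserved under base change. Thus it suffices to prove the following: a proper hyperbolic polycurve $Y\to\Spec \overline{k}$ over an algebraically closed field $\overline{k}$ admits no non-constant morphism from $\mathbb{P}^1_{\overline{k}}$ (equivalently, contains no integral rational subcurve). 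I would prove this by induction on the relative dimension $n$, using a sequence of parametrizing morphisms $Y=Y_n\to\cdots\to Y_0=\Spec\overline{k}$. For $n=1$, $Y$ is a smooth projective curve of genus $g\geq 2$, and Riemann–Hurwitz forbids a non-constant morphism $\mathbb{P}^1\to Y$, since it would give $-2=(\deg\varphi)(2g-2)+\deg R$ with $\deg R\geq 0$ and $(\deg\varphi)(2g-2)\geq 0$. For the inductive step, given a non-constant $f\colon \mathbb{P}^1\to Y_n$, the composite $\mathbb{P}^1\to Y_{n-1}$ is constant by the inductive hypothesis applied to the proper hyperbolic polycurve $Y_{n-1}$, so $f$ factors through a fibre of the proper hyperbolic curve $Y_n\to Y_{n-1}$, contradicting the case $n=1$.

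Finally, $\mathfrak{X}$ being a smooth model of $X$ all of whose closed fibres contain no rational curves, Proposition \ref{neronproperty}.1 yields that $\mathfrak{X}$ is the N\'eron model of $X=\mathfrak{X}\times_T\Spec K(T)$, as desired. I do not anticipate a serious obstacle: the argument is essentially the reduction to Proposition \ref{neronproperty}.1 together with the elementary ``no rational curves'' induction; the only point demanding a little care is the base-change stability of the notion of a proper hyperbolic curve used to identify the geometric closed fibres as proper hyperbolic polycurves, which is immediate from Definition \ref{hyperbolicpoly}.1.
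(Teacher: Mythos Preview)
Your proof is correct and follows essentially the same approach as the paper: reduce to Proposition \ref{neronproperty}.1 and verify that the closed fibres contain no rational curves by noting that any morphism from a rational curve to a hyperbolic curve is constant, hence by an immediate induction a proper hyperbolic polycurve over a field contains no rational curves. The paper states this more tersely, but the argument is the same.
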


\begin{proof}
By Proposition \ref{neronproperty}.1, it suffices to show that there exist no rational curves contained in the special fiber of $\mathfrak{X}$.
Since any morphism from a rational curve to a hyperbolic curve over a field is constant, a proper hyperbolic polycurve over a field contains no rational curves.
Hence, Proposition \ref{polyneron} holds.
\end{proof}

\begin{prop}[cf.\,\cite{Mor} and {\cite[Section 7]{Nag}}]
Let $Z$ be a connected Noetherian regular scheme.
\begin{enumerate}
\setlength{\itemindent}{-14pt}
\item
Let $K(Z)$ be the function field of $Z$ and $C_{K(Z)} \to \Spec K(Z)$ a proper hyperbolic curve.
The following are equivalent:
\begin{itemize}
\setlength{\itemindent}{-10pt}
\item There exists a proper hyperbolic curve $C_{Z} \to Z$ such that $C_{Z}\times_{Z}\Spec K(Z)$ is isomorphic to $C_{K(Z)}$ over $K(Z)$.
\item There exist a nonempty open subset $U$ of $Z$ satisfying that $Z\setminus U$ is of codimension $\geq 2$ in $Z$ and a proper hyperbolic curve $C_{U} \to U$ such that $C_{U}\times_{U}\Spec K(Z)$ is isomorphic to $C_{K(Z)}$ over $K(Z)$.
\item For any point $z\in Z$ of codimension $1$, there exists a proper hyperbolic curve $C_{O_{Z,z}} \to \Spec O_{Z,z}$ such that $C_{O_{Z,z}}\times_{O_{Z,z}}\Spec K(Z)$ is isomorphic to $C_{K(Z)}$ over $K(Z)$.
\end{itemize}
In this case, the scheme $C_{Z}$ (respectively, $C_{U}$; $C_{O_{Z,z}}$) is unique up to a canonical isomorphism over $Z$ (respectively, $U$; $O_{Z,z}$ for each point $z \in Z$ of codimension $1$).
Hence, the scheme $C_{Z}\times_{Z}U$ (respectively, $C_{Z}\times_{Z}\Spec O_{Z,z}$) is isomorphic to $C_{U}$ (respectively, $C_{O_{Z,z}}$) over $U$ (respectively, $O_{Z,z}$ for each point $z \in Z$ of codimension $1$) (cf.\,Remark \ref{globalization}).
\item Let $Y$ be a connected Noetherian regular scheme over $Z$, $V$ a nonempty open subset of $Y$ satisfying that $Y\setminus V$ is of codimension $\geq 2$ in $Y$, and $C' \to Z$ a proper hyperbolic curve. 
Then the restriction map
$$\mathrm{Mor}_{Z}(Y,C') \to \mathrm{Mor}_{Z}(V,C')$$
is bijective.
Here, $\mathrm{Mor}_{Z}(Y,C')$ (respectively, $\mathrm{Mor}_{Z}(V,C')$) is the set of morphisms from $Y$ to $C'$ over $Z$ (respectively, from $V$ to $C'$ over $Z$).
\end{enumerate}
\label{MB}
\end{prop}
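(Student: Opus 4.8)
The plan is to dispose first of the formal implications and the uniqueness clauses, then to reduce both assertions to a single extension problem over the spectrum of a regular local ring, and finally to solve that local problem; the last step, and specifically the control of the degenerate fibre, is where the real work lies. \emph{Reductions.} A connected regular scheme is integral, so Lemma~\ref{moduliunique} applies over $Z$, over any nonempty open $U\subseteq Z$, and over each $\Spec\mathcal{O}_{Z,z}$; this yields all the uniqueness assertions in~(1) and, by base change, the compatibilities $C_{Z}\times_{Z}U\cong C_{U}$ and $C_{Z}\times_{Z}\Spec\mathcal{O}_{Z,z}\cong C_{\mathcal{O}_{Z,z}}$. The implications ``first $\Rightarrow$ second $\Rightarrow$ third'' are then mere restriction. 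For ``third $\Rightarrow$ first'', each $C_{\mathcal{O}_{Z,z}}$ is of finite presentation, hence spreads out to a proper hyperbolic curve with generic fibre $C_{K(Z)}$ over an open neighbourhood $W_{z}\ni z$, and by the uniqueness just noted these glue over the overlaps into a proper hyperbolic curve $C_{U}\to U$ with $U:=\bigcup_{z}W_{z}$; as $U$ contains every point of codimension $\le 1$, the set $Z\setminus U$ has codimension $\ge 2$, and it remains to prove ``second $\Rightarrow$ first''. In~(2), injectivity of the restriction map is immediate ($C'\to Z$ is separated and $V$ is dense in the reduced scheme $Y$). For both remaining statements, let $V^{*}$ be the largest open of $Y$ (resp. $Z$) over which the morphism (resp. the curve) extends --- well defined by Lemma~\ref{moduliunique} --- which necessarily contains all points of codimension $\le 1$. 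If $V^{*}$ were not everything we could pick a point $\zeta$ of the complement of minimal codimension $c\ge 2$; localizing at $\zeta$ reduces us to: $R$ a regular local ring of dimension $c\ge 2$, the object given on the punctured spectrum $U_{R}:=\Spec R\setminus\{\m\}$, and to be extended across $\m$.

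\emph{Assertion~(2), local case.} Let $f\colon U_{R}\to C'_{R}:=C'\times_{Z}\Spec R$ be the given section and $\Gamma\subseteq C'_{R}$ the schematic closure of its graph, an integral scheme proper and birational over $\Spec R$. Since $\Spec R$ is normal, $H^{0}(\Gamma,\mathcal{O}_{\Gamma})$ is a finite $R$-subalgebra of $K(R)$, hence equals $R$, so the fibre $\Gamma_{\m}$ is connected (Zariski's connectedness theorem). If $\Gamma_{\m}$ were not $0$-dimensional it would contain a curve; but $\Gamma\to\Spec R$ is a proper birational modification of a regular local scheme, so --- reducing to the excellent case by passing to the completion $\widehat{R}$ (``is an isomorphism'' descends along the flat map $R\to\widehat{R}$, and schematic closure commutes with it) and, when $c>2$, cutting with general elements of $\m\setminus\m^{2}$ down to a $2$-dimensional base --- by the structure theory of proper birational morphisms of regular surfaces its exceptional fibre is a union of rational curves; hence $(C'_{R})_{\m}$ would contain a rational curve, contradicting that it is a hyperbolic curve over $\kappa(\m)$. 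Therefore $\Gamma_{\m}$ is $0$-dimensional, so $\Gamma\to\Spec R$ is proper and quasi-finite, hence finite, hence --- being birational onto the normal scheme $\Spec R$ --- an isomorphism; composing its inverse with $\Gamma\hookrightarrow C'_{R}\to C'$ extends $f$.

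\emph{Assertion~(1), local case.} Now one must extend $C_{U_{R}}\to U_{R}$ across $\m$. The plan is to pass to the relative Jacobian $A_{U_{R}}:=\mathrm{Pic}^{0}_{C_{U_{R}}/U_{R}}$, an abelian scheme over $U_{R}$, and to invoke the classical purity theorem for abelian schemes over regular bases (cf.~\cite{Mor}): since $U_{R}$ contains every point of codimension $\le 1$, $A_{U_{R}}$ extends uniquely to an abelian scheme $A_{R}$ over $\Spec R$. Next one extends the $A_{U_{R}}$-torsor $P_{U_{R}}:=\mathrm{Pic}^{1}_{C_{U_{R}}/U_{R}}$, into which $C_{U_{R}}$ closed-immerses: its class in $H^{1}(U_{R},A_{U_{R}})$ is killed by $2g-2$, since $\mathrm{Pic}^{2g-2}_{C_{U_{R}}/U_{R}}$ carries the canonical section $[\omega_{C_{U_{R}}/U_{R}}]$, and a torsion torsor class over $U_{R}$ extends across the codimension-$\ge 2$ locus by the purity of \'etale cohomology of the regular scheme $\Spec R$ with finite coefficients applied to $A_{R}[2g-2]$, together with the equality $A_{R}(\Spec R)=A_{R}(U_{R})$ --- the local case of an assertion-(2)-type statement for $A_{R}$, valid because abelian varieties contain no rational curves. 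One thereby obtains a smooth proper $A_{R}$-torsor $P_{R}\to\Spec R$ with $P_{R}|_{U_{R}}\cong P_{U_{R}}$; let $C_{R}$ be the schematic closure of $C_{U_{R}}$ in $P_{R}$. Since $C_{U_{R}}$ is already closed in $P_{R}|_{U_{R}}$ one has $C_{R}|_{U_{R}}=C_{U_{R}}$, so $C_{R}\to\Spec R$ is smooth and proper of relative dimension $1$ over $U_{R}$; it then remains only to prove that its fibre over $\m$ is a smooth geometrically connected curve of genus $g$, after which $C_{R}\to\Spec R$ is the sought extension (Lemma~\ref{moduliunique} identifying its restriction with $C_{U_{R}}$) and, by the d\'evissage of the first paragraph, assertion~(1) follows.

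\emph{Main obstacle.} The decisive point is the last one: excluding a degenerate fibre of $C_{R}$ over $\m$. Absence of rational curves in the abelian-torsor fibre $P_{R}|_{\m}$ already rules out rational components of $(C_{R})_{\m}$; to upgrade this to smoothness one reduces --- by completion and general hyperplane sections, just as in assertion~(2) --- to the case where $R$ is a complete regular local ring of dimension $2$, and then invokes that a non-smooth degeneration (necessarily of $A_{n}$ type) of a relative curve with \emph{regular} total space propagates along a divisor of the base, a divisor that here is forced into $\{\m\}$ and is therefore empty. The missing ingredient that lets one assume the total space regular is a relative minimal-model (``simultaneous semistable reduction'') argument for curves over the $2$-dimensional base $\Spec R$; this, together with the structure theory of proper birational morphisms of regular surfaces used in assertion~(2), is the main technical work, whereas the remaining steps --- the d\'evissage, the separatedness arguments, and the cohomological-purity inputs --- are routine given the results cited.
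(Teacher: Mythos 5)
Your proposal sets out to reprove the purity theorem for families of smooth proper curves from scratch, whereas the paper treats this proposition as essentially a restatement of Moret-Bailly's theorem and proves it by citation: assertion (1) is \cite{Mor} combined with Lemma~\ref{moduliunique} (which supplies all the uniqueness and compatibility clauses), and assertion (2), after the reduction to $Y=Z$, is \cite[Lemme 1]{Mor}. Your formal reductions, the uniqueness arguments, and your treatment of assertion (2) --- graph closure, $H^{0}(\Gamma,\mathcal{O}_{\Gamma})=R$ by normality, exclusion of a positive-dimensional closed fibre because the exceptional locus of a proper birational modification of a $2$-dimensional regular local scheme is a union of rational curves while a hyperbolic fibre contains none --- are sound and are in substance the arguments behind \cite[Lemme 1]{Mor} and \cite[Proposition 4.13]{Liu}; the only soft spot there is the reduction to dimension $2$, where one must slice by the strict transform rather than the naive fibre product and check that the exceptional fibre remains positive-dimensional.

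The genuine gap is in the local case of assertion (1), and you flag it yourself. After extending the Jacobian by purity for abelian schemes, extending the torsor $\mathrm{Pic}^{1}$, and forming the schematic closure $C_{R}\subseteq P_{R}$, you must still show that $C_{R}\to\Spec R$ is flat with one-dimensional fibres and that $(C_{R})_{\m}$ is a smooth geometrically connected curve of genus $g$. The absence of rational curves in the abelian-torsor fibre $(P_{R})_{\m}$ only excludes rational components; it bounds neither the dimension of the closed fibre of the closure nor its non-reducedness, and it does not rule out a singular non-rational degeneration. The determinantal ``discriminant is a divisor'' argument presupposes exactly the flatness and regularity of the total space that you defer to an unproved ``relative minimal-model / simultaneous semistable reduction'' step; that step is precisely the substantive content of the theorem being cited, so the proof is incomplete at its decisive point. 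A secondary but real issue: extending the class of $\mathrm{Pic}^{1}$ via purity for $H^{1}(-,A_{R}[2g-2])$ uses \'etale purity, which does not apply when a residue characteristic divides $2g-2$ (then $A_{R}[2g-2]$ is only finite flat and one would need an fppf purity statement that is far from classical). In short, the proposal correctly reconstructs the strategy of \cite{Mor} but does not close it, whereas the paper's proof is complete because it delegates exactly these points to that reference.
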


\begin{proof}
Assertion 1 follows from \cite{Mor} and Lemma \ref{moduliunique}.
To show assertion 2, we may assume that $Y=Z$.
Then the assertion follows from \cite[Lemme 1]{Mor} (or \cite[Section 7]{Nag}).
\end{proof}

\begin{rem}
The latter part of Proposition \ref{MB}.1 does not holds in general.
Let $\mathbb{P}^{1}_{\Z}$ be the projective line over $\Spec \Z$.
Write $B_{p}$ for the scheme obtained by blowing up of $\mathbb{P}^{1}_{\Z_{(p)}}$ at some closed point of $\mathbb{P}^{1}_{\Z_{(p)}}$.
Then write $C_{p}$ for the scheme obtained by contraction of the strict transform of the special fiber of $\mathbb{P}^{1}_{\Z_{(p)}}$ in $B_{p}$.
Consider the family of smooth models $\{ D_{p} \to \Spec \Z_{(p)} \mid p\text{ is a prime number} \}$ of $\mathbb{P}^{1}_{\Q} \to \Spec \Q$, where $D_{p}$ is $\mathbb{P}^{1}_{\Z_{(p)}}$ or $C_{p}$.
One can verify that there exists a proper smooth curve $C \to \Spec \Z$ whose base change to $\Spec \Z_{(p)}$ is isomorphic to $C_{p}$ for all $p$ if and only if $D_{p} = \mathbb{P}^{1}_{\Z_{(p)}}$ for all but finite $p$.
\label{globalization}
\end{rem}

\begin{thm}
Let $Z$ be a connected Noetherian regular scheme.
\begin{enumerate}
\setlength{\itemindent}{-14pt}
\item
Let $K(Z)$ be the function field of $Z$ and
$$(\mathcal{S}_{K(Z)}): X_{n, K(Z)} \to \ldots \to X_{1, K(Z)}\to \Spec K(Z)$$ a sequence of parametrizing morphisms of a proper hyperbolic polycurve.
The following are equivalent:
\begin{itemize}
\setlength{\itemindent}{-10pt}
\item There exists a sequence of parametrizing morphisms of a proper hyperbolic polycurve
$$(\mathcal{S}_{Z}): X_{n, Z} \to \ldots \to X_{1, Z}\to Z$$
such that the base change of $(X_{n, Z}, (\mathcal{S}_{Z}))$ to $\Spec K(Z)$ is isomorphic to $(X_{n, K(Z)},(\mathcal{S}_{K(Z)}))$.
\item There exist a nonempty open subset $U$ of $Z$ satisfying that $Z\setminus U$ is of codimension $\geq 2$ in $Z$ and a sequence of parametrizing morphisms of a proper hyperbolic polycurve
$$(\mathcal{S}_{U}): X_{n, U} \to \ldots \to X_{1, U}\to U$$
such that the base change of $(X_{n, U}, (\mathcal{S}_{U}))$ to $\Spec K(Z)$ is isomorphic to $(X_{n,K(Z)},(\mathcal{S}_{K(Z)}))$.
\item For any point $z\in Z$ of codimension $1$, there exists a sequence of parametrizing morphisms of a proper hyperbolic polycurve
$$(\mathcal{S}_{O_{Z,z}}): X_{n, O_{Z,z}} \to \ldots \to X_{1, O_{Z,z}}\to  \Spec O_{Z,z}$$
such that the base change of $(X_{n, O_{Z,z}}, (\mathcal{S}_{O_{Z,z}}))$ to $\Spec K(Z)$ is isomorphic to $(X_{n,K(Z)},(\mathcal{S}_{K(Z)}))$.
\end{itemize}
In this case, $(X_{n, Z}, (\mathcal{S}_{Z}))$ (respectively, $(X_{n, U}, (\mathcal{S}_{U}))$; $(X_{n, O_{Z,z}}, (\mathcal{S}_{O_{Z,z}}))$) is unique up to a canonical isomorphism over $Z$ (respectively, $U$; $O_{Z,z}$ for each point $z \in Z$ of codimension $1$).
\item
Let $Y$ be a connected Noetherian regular scheme over $Z$, $V$ a nonempty open subset of $Y$ satisfying that $Y\setminus V$ is of codimension $\geq 2$ in $Y$, and $X'_{Z} \to Z$ a proper hyperbolic polycurve.
Then the restriction map
$$\mathrm{Mor}_{Z}(Y,X'_{Z}) \to \mathrm{Mor}_{Z}(V,X'_{Z})$$
is bijective.
Here, $\mathrm{Mor}_{Z}(Y,X'_{Z})$ (respectively, $\mathrm{Mor}_{Z}(V,X'_{Z})$) is the set of morphisms from $Y$ to $X'_{Z}$ over $Z$ (respectively, from $V$ to $X'_{Z}$ over $Z$).
\item Let $K(Z)$ and $(\mathcal{S}_{K(Z)})$ be as in assertion 1.
Suppose that the equivalent conditions of assertion 1 are satisfied.
Let $X_{Z}\to Z$ be a proper smooth morphism such that $X_{Z}\times_{Z}\Spec K(Z)$ is isomorphic to $X_{n,K(Z)}$ over $K(Z)$.
Then $X_{Z}$ is canonically isomorphic to $X_{n, Z}$ over $Z$.
\end{enumerate}
\label{polypurity}
\end{thm}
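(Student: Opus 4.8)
The plan is to establish the three assertions in order, proving assertions~1 and~2 by induction on the relative dimension $n$ — the case $n=1$ being precisely Proposition~\ref{MB} — and then deducing assertion~3 from assertions~1 and~2 together with the N\'eron‑model results of Propositions~\ref{neronproperty} and~\ref{polyneron}; the recurring tools are the uniqueness of proper hyperbolic curves over irreducible normal bases (Lemma~\ref{moduliunique}) and a description of the codimension‑$1$ points of a proper hyperbolic polycurve over $Z$. I would begin with assertion~2, which is the simplest. Inducting on $n$ with base case Proposition~\ref{MB}.2: given $\phi\in\mathrm{Mor}_Z(V,X'_{n,Z})$, compose with $X'_{n,Z}\to X'_{n-1,Z}$ to get $\bar\phi\in\mathrm{Mor}_Z(V,X'_{n-1,Z})$, which by the inductive hypothesis extends uniquely to $\Phi'\in\mathrm{Mor}_Z(Y,X'_{n-1,Z})$. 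Pulling back the proper hyperbolic curve $X'_{n,Z}\to X'_{n-1,Z}$ along $\Phi'$ gives a proper hyperbolic curve $X'_{n,Z}\times_{X'_{n-1,Z}}Y\to Y$, of which $\phi$ (lying over $\bar\phi=\Phi'|_V$) is a section over $V$; Proposition~\ref{MB}.2 with base $Y$ extends this section uniquely over $Y$, and composing with the projection to $X'_{n,Z}$ produces the required extension of $\phi$. Uniqueness is checked the same way: two extensions of $\phi$ induce the same morphism $Y\to X'_{n-1,Z}$ by the inductive uniqueness, hence two sections of the pulled‑back curve agreeing on $V$, hence the same section.

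For assertion~1 the implications $(1)\Rightarrow(2)\Rightarrow(3)$ are immediate, since base change preserves sequences of parametrizing morphisms and every codimension‑$1$ point of $Z$ lies in any open subset whose complement has codimension $\geq 2$. To prove $(3)\Rightarrow(1)$, write $(\mathcal{S}_{K(Z)})$ as $X_{n,K(Z)}\to X_{n-1,K(Z)}\to\cdots\to\Spec K(Z)$ and let $(\mathcal{S}_{n-1,K(Z)})$ be its truncation at level $n-1$; the truncations of the given local sequences $(\mathcal{S}_{O_{Z,z}})$ witness condition~$(3)$ for $(\mathcal{S}_{n-1,K(Z)})$, so by the inductive hypothesis there is a sequence $(\mathcal{S}_{n-1,Z}):X_{n-1,Z}\to\cdots\to Z$ over $Z$, unique up to canonical isomorphism, whose restriction over each $\Spec O_{Z,z}$ is $(\mathcal{S}_{O_{Z,z}})$ truncated. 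The scheme $X_{n-1,Z}$ is connected, Noetherian and regular — being smooth over the connected regular scheme $Z$, with geometrically integral fibres — so I would extend the remaining layer by applying Proposition~\ref{MB}.1 to $X_{n-1,Z}$ and the proper hyperbolic curve $X_{n,K(Z)}\times_{X_{n-1,K(Z)}}\Spec K(X_{n-1,Z})$ over its function field. The heart of the matter is verifying condition~$(3)$ of Proposition~\ref{MB}.1: by the dimension formula for the flat morphism $X_{n-1,Z}\to Z$ and the geometric integrality of its fibres, a codimension‑$1$ point $w$ of $X_{n-1,Z}$ either lies over the generic point of $Z$, in which case it is a codimension‑$1$ point of $X_{n-1,K(Z)}$ and $X_{n,K(Z)}\to X_{n-1,K(Z)}$ is already a proper hyperbolic curve there, or it is the generic point of the fibre over a codimension‑$1$ point $z$ of $Z$, in which case — identifying $O_{X_{n-1,Z},w}$ with $O_{X_{n-1,O_{Z,z}},w}$ — the curve $X_{n,O_{Z,z}}\to X_{n-1,O_{Z,z}}$ restricted over $\Spec O_{X_{n-1,Z},w}$ supplies the required extension. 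Proposition~\ref{MB}.1 then yields a proper hyperbolic curve $X_{n,Z}\to X_{n-1,Z}$, hence a sequence $(\mathcal{S}_Z)$ over $Z$; that its base change to $\Spec K(Z)$ is isomorphic to $(X_{n,K(Z)},(\mathcal{S}_{K(Z)}))$, and that $(X_{n,Z},(\mathcal{S}_Z))$ is unique up to canonical isomorphism, follow layer by layer from the inductive hypothesis and from Lemma~\ref{moduliunique} over $X_{n-1,K(Z)}$ and over $X_{n-1,Z}$. The uniqueness over $U$, resp.\ over $O_{Z,z}$, is this same uniqueness applied with $Z$ replaced by $U$, resp.\ $\Spec O_{Z,z}$.

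For assertion~3, fix the canonical isomorphism $X_{n,Z}\times_Z\Spec K(Z)\cong X_{n,K(Z)}$ provided by assertion~1 and compose it with the given isomorphism to obtain a birational map $\psi$ from $X_Z$ to $X_{n,Z}$ over $Z$; here $X_Z$ and $X_{n,Z}$ are both connected, Noetherian, regular (hence integral) and proper over $Z$. For a codimension‑$1$ point $z$ of $Z$, the ring $O_{Z,z}$ is a discrete valuation ring, so $X_{n,Z}\times_Z\Spec O_{Z,z}$ is a proper hyperbolic polycurve over it, hence a \emph{proper} N\'eron model of $X_{n,K(Z)}$ by Proposition~\ref{polyneron}; since $X_Z\times_Z\Spec O_{Z,z}$ is a smooth model of $X_{n,K(Z)}$, Proposition~\ref{neronproperty}.4 shows that $\psi$ is an isomorphism over $\Spec O_{Z,z}$. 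Hence $\psi$ is a morphism on an open subset of $X_Z$ containing every point lying over a point of $Z$ of codimension $\leq 1$, and — since $X_Z\to Z$ is flat with equidimensional fibres — the complement of that open subset has codimension $\geq 2$ in $X_Z$. By assertion~2, whose target $X_{n,Z}$ is a proper hyperbolic polycurve, $\psi$ extends to a $Z$‑morphism $h:X_Z\to X_{n,Z}$; this $h$ is proper ($X_Z$ being proper and $X_{n,Z}$ separated over $Z$), it is birational, and it restricts to an isomorphism over each $\Spec O_{Z,z}$ with $z$ of codimension $1$ and over the generic point. Therefore the locus in $X_Z$ where $h$ fails to be a local isomorphism is contained in the inverse image of the codimension‑$\geq 2$ locus of $Z$, so has codimension $\geq 2$; but the purity theorem of van der Waerden (\cite[Corollaire (21.12.16)]{EGA}, used just as in the proof of Proposition~\ref{neronproperty}.4) forces that locus, for the birational proper morphism $h$ between regular Noetherian schemes, to be empty or of pure codimension $1$. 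Hence $h$ is an isomorphism, and it is the unique $Z$‑isomorphism restricting on generic fibres to the fixed identification.

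I expect the main obstacle to be the inductive extension of the top layer in assertion~1: correctly enumerating the codimension‑$1$ points of $X_{n-1,Z}$ (this uses the geometric integrality of the fibres of $X_{n-1,Z}\to Z$ and the dimension formula for flat morphisms) and then propagating isomorphisms through all $n$ layers, so that the sequence $(\mathcal{S}_Z)$ one constructs genuinely base‑changes to the prescribed $(\mathcal{S}_{K(Z)})$ compatibly with every lower layer. Conceptually, the theorem amounts to splitting ``purity over a regular base'' into purity over discrete valuation rings — where N\'eron models and the van der Waerden purity theorem are available — and the morphism‑extension statement of assertion~2, which ultimately reflects the absence of rational curves in the fibres of proper hyperbolic curves.
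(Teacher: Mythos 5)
Your proof is correct and follows essentially the same route as the paper: assertions 1 and 2 are reduced by induction on $n$ to the curve case handled by Proposition \ref{MB}, and assertion 3 is obtained by combining the N\'eron-model uniqueness over each $\Spec O_{Z,z}$ (Propositions \ref{polyneron} and \ref{neronproperty}.4) with the extension statement of assertion 2 and van der Waerden's purity theorem. Your write-up usefully fills in details the paper leaves implicit, notably the identification of the codimension-$1$ points of $X_{n-1,Z}$ needed to invoke Proposition \ref{MB}.1 in the inductive step.
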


\begin{proof}
To show assertion 1 and 2, we may assume that $n=1$, in which case the assertions follow from Proposition \ref{MB}.1 and 2.
Next, we show assertion 3.
By Proposition \ref{neronproperty}.4, $X_{Z}\times_{Z}\Spec O_{Z,z}$ is canonically isomorphic to $X_{n,O_{Z,z}}$ over $\Spec O_{Z,z}$ for any point $z\in Z$ of codimension $1$.
Therefore, there exists an open subset $U$ of $Z$ such that $Z\setminus U$ is of codimension $\geq 2$ in $Z$ and $X_{Z}\times_{Z}U$ is canonically isomorphic to $X_{n,U}$ over $U$.
By assertion 2, there exists a canonical birational morphism $\phi : X_{Z}\to X_{n,Z}$ over $Z$.
$\phi$ is isomorphism by van der Waerden's purity theorem (cf.\,\cite[Corollaire (21.12.16)]{EGA}).
\end{proof}

\label{onreg}

\section{Existence of a smooth model of a proper hyperbolic polycurve with respect to a given sequence of parameterizing morphisms}
In this section, we discuss structures of smooth models of a proper hyperbolic polycurve over a Dedekind scheme.
For proper hyperbolic polycurves of relative dimension $2$ over complex manifolds, some of the main results of this section (part of Theorem \ref{uniqueness} and Corollary \ref{cor}) are proven in \cite{JY}.

\begin{dfn}[cf.\,{\cite[Theorem 1.2.3 and Theorem 1.3]{Nag}}]
Let $z$ be a positive integer.
Define a function $f_{z}(m)$ for $m \geq 2$ in the following way:
\begin{itemize}
\item
For $m=2$, $f_{z}(2) = z+1$.
\item
For $m=3$, $f_{z}(3) = 2^{z^{2}}$.
\item
For $m \geq 3$,
$$f_{z}(m+1)= (f_{z}(m)) \times (2^{z^{2} \times f_{z}(m)^{2}})^{f_{z}(m)}.$$
\end{itemize}
\end{dfn}

\begin{thm}
Let $Z$ be a connected Noetherian regular scheme, $K(Z)$ the field of fractions of $Z$, and $\mathfrak{X} \to Z$ a proper smooth scheme.
Write $X$ for the scheme $\mathfrak{X}\times_{Z}\Spec K(Z)$.
Suppose that $X$ is a proper hyperbolic polycurve of relative dimension $n$ over $K(Z)$.
\begin{enumerate}
\item
Let
$$(\mathcal{S}): X=X_{n} \to \ldots \to X_{1}\to \Spec K(Z)$$
be a sequence of parametrizing morphisms of a proper hyperbolic curve $X\to\Spec K(Z)$.
Suppose that the residual characteristic of every point of $Z$ of codimension $1$ is more than $2^{2g_{\mathcal{S}} \times f_{2g_{\mathcal{S}}}(n)}$ or equal to $0$.
Then there exists a unique sequence of parametrizing morphisms
$$(\mathfrak{S}): \mathfrak{X}_{n} \to \ldots \to \mathfrak{X}_{1}\to Z$$
(up to canonical isomorphism) such that the base change of $(\mathfrak{X}_{n},(\mathfrak{S}))$ to $\Spec K(Z)$ is isomorphic to $(X_{n},(\mathcal{S}))$ and $\mathfrak{X}$ is canonically isomorphic to $\mathfrak{X}_{n}$.
In particular, if the residual characteristic of every point of $Z$ of codimension $1$ is more than $2^{2g_{\mathcal{S}} \times f_{2g_{\mathcal{S}}}(n)}$ or equal to $0$, $\mathfrak{X}\to Z$ has a structure of a proper hyperbolic polycurve.
If, moreover, $Z$ is Dedekind, $\mathfrak{X}$ is the N\'eron model of $X$ over $Z$.
\item
Suppose that the residual characteristic of every point of $Z$ of codimension $1$ is more than $2^{(|\chi(X)|+2) \times f_{(|\chi(X)|+2)}(n)}$ or equal to $0$.
Then, for any sequence of parameterizing morphisms $(\mathcal{S})$ of $X\to \Spec K(Z)$, there exists a sequence of parameterizing morphisms $(\mathfrak{S}')$ of a proper hyperbolic polycurve $\mathfrak{X}'\to Z$ such that the base change of $(\mathfrak{X}', (\mathfrak{S}'))$ to $\Spec K(Z)$ is isomorphic to $(X,(\mathcal{S}))$ and $\mathfrak{X}$ is canonically isomorphic to $\mathfrak{X}'$.
\end{enumerate}
\label{uniqueness}
\end{thm}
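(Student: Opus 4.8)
The plan is to prove assertion 1 by induction on the relative dimension $n$, after reducing to the case where $Z=\Spec R$ for a discrete valuation ring $R$; assertion 2 will then be a formal consequence. For the reduction, observe that by Theorem \ref{polypurity}.1 and Theorem \ref{polypurity}.3 it suffices to produce, for every point $z\in Z$ of codimension $1$, a sequence of parametrizing morphisms of a proper hyperbolic polycurve over $\Spec O_{Z,z}$ whose base change to $\Spec K(Z)$ is isomorphic to $(X_n,(\mathcal{S}))$: Theorem \ref{polypurity}.1 then glues these into a sequence $(\mathfrak{S})$ over $Z$, unique up to canonical isomorphism, Theorem \ref{polypurity}.3 identifies $\mathfrak{X}$ with its top term $\mathfrak{X}_n$, and when $Z$ is Dedekind the N\'eron property is Proposition \ref{polyneron}. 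Since $O_{Z,z}$ is a discrete valuation ring with fraction field $K(Z)$ and residual characteristic that of $z$, I may assume $Z=\Spec R$ with $R$ a discrete valuation ring with fraction field $K$, generic point $\eta$, closed point $s$, and residue characteristic $p$ with $p=0$ or $p>2^{2g_{\mathcal{S}}\times f_{2g_{\mathcal{S}}}(n)}$.

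The case $n=1$ is immediate: $\mathfrak{X}$ is connected (its generic fibre is geometrically connected), so $\mathfrak{X}\to\Spec R$ has constant relative dimension $1$ with geometrically connected fibres of constant genus $g\geq 2$, hence is a proper hyperbolic curve; it is the N\'eron model of $X$ by Proposition \ref{neronproperty}.2 and uniqueness is Proposition \ref{MB}.1. For the inductive step assume $n\geq 2$ and write $(\mathcal{S}):X=X_n\to\cdots\to X_1\to\Spec K$. Since $X=\mathfrak{X}_K$ has good reduction and there is a $K$-morphism $X\to X_1$, Proposition \ref{neronproperty}.3 shows the proper hyperbolic curve $X_1$ has good reduction; let $\mathfrak{X}_1\to\Spec R$ be its smooth model, which is the N\'eron model of $X_1$ by Proposition \ref{neronproperty}.2. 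Applying the N\'eron mapping property of $\mathfrak{X}_1$ to the smooth $R$-scheme $\mathfrak{X}$, the morphism $X\to X_1$ extends uniquely to an $R$-morphism $\pi_1\colon\mathfrak{X}\to\mathfrak{X}_1$. Here $\mathfrak{X}_1$ is a connected regular Noetherian scheme of dimension $2$ whose generic fibre $X_1$ is an open subscheme.

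Next I would equip $\pi_1\colon\mathfrak{X}\to\mathfrak{X}_1$ with a structure of proper hyperbolic polycurve of relative dimension $n-1$ extending the given one on $X\to X_1$, via Theorem \ref{polypurity}.1 over the base $\mathfrak{X}_1$. The codimension-$1$ points of $\mathfrak{X}_1$ are the closed points of $X_1$, over which $O_{\mathfrak{X}_1,w}$ is an $X_1$-algebra and one pulls back $X\to X_1$ with its sequence, together with the generic point $w_0$ of the (irreducible) special fibre of $\mathfrak{X}_1$, at which $B:=O_{\mathfrak{X}_1,w_0}$ is a discrete valuation ring with fraction field $K(X_1)$ and residue characteristic $p$. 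At $w_0$ one must show that $\mathfrak{X}\times_{\mathfrak{X}_1}\Spec B$ is a proper \emph{smooth} model over $B$ of the proper hyperbolic polycurve $X\times_{X_1}\Spec K(X_1)$ (of relative dimension $n-1$, with the induced sequence, whose maximal fibre genus $g'$ satisfies $g'\leq g_{\mathcal{S}}$); granting this, the inductive hypothesis applies over $B$, since $f_z(m)$ is nondecreasing in $m$ and in $z$ and hence $2^{2g'\times f_{2g'}(n-1)}\leq 2^{2g_{\mathcal{S}}\times f_{2g_{\mathcal{S}}}(n)}<p$, and it produces the polycurve required over $B$. Theorem \ref{polypurity}.1 then gives a polycurve structure $\mathfrak{X}\to\mathfrak{X}_{n-1}\to\cdots\to\mathfrak{X}_1$; composing with $\mathfrak{X}_1\to\Spec R$ yields $(\mathfrak{S})$, which by Theorem \ref{polypurity}.1 applied over $X_1$ restricts over $K$ to $(X,(\mathcal{S}))$, Theorem \ref{polypurity}.3 over $\Spec R$ identifies $\mathfrak{X}$ with $\mathfrak{X}_n$, and uniqueness of $(\mathfrak{S})$ follows from the uniqueness assertions of Proposition \ref{MB}.1 and Theorem \ref{polypurity}.1 together with the inductive hypothesis over $B$. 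Finally, assertion 2 follows from assertion 1: for every sequence $\mathcal{S}$ of $X$ each factor $|2-2g_i|$ divides, hence is at most, $|\chi(X)|$, so $2g_{\mathcal{S}}\leq|\chi(X)|+2$, whence the hypothesis of assertion 2 implies that of assertion 1 for every $\mathcal{S}$.

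The main obstacle is the smoothness of $\pi_1$. A dimension count (every fibre of $\pi_1$ has dimension exactly $n-1$, so $\pi_1$ is flat by miracle flatness) reduces this to the smoothness of the morphism $\mathfrak{X}_s\to(\mathfrak{X}_1)_s$ induced on special fibres, that is, to showing that a proper flat family over the hyperbolic curve $(\mathfrak{X}_1)_s$, whose generic member ought to be a hyperbolic polycurve of dimension $n-1$, has no degenerate fibres. There is no formal reason an arbitrary proper smooth model $\mathfrak{X}$ should have this property, and this is precisely the point where the bound $2^{2g_{\mathcal{S}}\times f_{2g_{\mathcal{S}}}(n)}$ is used: one must invoke the largeness of $p$, through the local extension results of \cite{Nag} (the source of the functions $f_z$) together with the inductive hypothesis, to exclude any degeneration of this family over the special fibre — for instance a non-geometrically-reduced generic fibre of $\mathfrak{X}_s\to(\mathfrak{X}_1)_s$, or a fibre containing a rational curve, either of which would ultimately contradict $\mathfrak{X}$ being a hyperbolic polycurve. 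I expect the bulk of the argument, and the reason the bound has its particular recursive shape, to be concentrated in this step.
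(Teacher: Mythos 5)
Your reduction steps match the paper's: assertion 2 from assertion 1 via $2g_{\mathcal{S}}\leq|\chi(X)|+2$, uniqueness from Theorem \ref{polypurity}.1, and the localization to the case $Z=\Spec R$ with $R$ a discrete valuation ring via Theorem \ref{polypurity}. Where you diverge is in the treatment of the DVR case itself. The paper disposes of it in one line by citing \cite[Theorems 1.2.1, 1.2.3, and 1.3]{Nag}, which assert precisely the statement you need: a proper smooth scheme over a discrete valuation ring of sufficiently large (or zero) residue characteristic whose generic fibre is a proper hyperbolic polycurve with a chosen sequence of parametrizing morphisms acquires a compatible polycurve structure over the whole base. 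Your inductive scaffolding (extend $X\to X_1$ to $\pi_1\colon\mathfrak{X}\to\mathfrak{X}_1$ by the N\'eron mapping property, then apply purity over the two-dimensional base $\mathfrak{X}_1$ and the inductive hypothesis at the generic point of its special fibre) is a plausible reconstruction of how such a result is proved, but as you yourself note it stalls at the essential point: there is no a priori reason $\pi_1$ is even flat over the special fibre of $\mathfrak{X}_1$ (the fibre dimension could jump to $n$ if $\mathfrak{X}_s\to(\mathfrak{X}_1)_s$ degenerates), let alone smooth, and your "miracle flatness" dimension count assumes exactly what must be proved. The appeal to "local extension results of \cite{Nag}" at that juncture is carrying the entire weight of the bound $2^{2g_{\mathcal{S}}\times f_{2g_{\mathcal{S}}}(n)}$. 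So either cite the three theorems of \cite{Nag} directly for the full DVR statement --- in which case your induction is superfluous and the proof collapses to the paper's --- or the argument as written has an unfilled gap at its core. As it stands the proposal is correct only modulo that citation, which is the same position the paper is in, but the paper's citation is precise whereas yours is a gesture toward the right reference at a step that reference does not literally state.
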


\begin{proof}
Theorem \ref{uniqueness}.2 follows from Theorem \ref{uniqueness}.1 and the fact that 
%% \geqを\leqに変更しました
$2g_{\mathcal{S}}\leq\left|\chi(X)\right|+2$ for any sequence of parametrizing morphisms $(\mathcal{S})$ of $X\to Z$.
The uniqueness portion of Theorem \ref{uniqueness}.1 follows from Theorem \ref{polypurity}.1.
We show the rest of Theorem \ref{uniqueness}.1.
%% 以下証明中の$K$を$K(Z)$に変更しました
Let
$$(\mathcal{S}): X=X_{n} \to \ldots \to X_{0} = \Spec K(Z)$$
be a sequence of parametrizing morphisms of $X$.
%% 2文に分けて, 間に$k$の定義を入れました
By Theorem \ref{polypurity}, we may assume that $Z$ is the spectrum of a discrete valuation ring $O_{K(Z)}$.
Then Theorem \ref{uniqueness}.1 follows from {\cite[Theorem 1.2.1, Theorem 1.2.3, and Theorem 1.3]{Nag}}.
\end{proof}

\begin{cor}
Let $Z, K(Z), \mathfrak{X},$ and $X$ be as in Theorem \ref{uniqueness}.
Suppose that $Z$ is a Dedekind scheme and that the residual characteristic of every closed point is more than $2^{(|\chi(X)|+2) \times f_{(|\chi(X)|+2)}(n)}$ or equal to $0$.
Then $X$ has good reduction with respect to any sequence of parameterizing morphisms of the proper hyperbolic polycurve $X \to \Spec K(Z)$ and $X$ has a proper N\'eron model.
\label{cor}
\end{cor}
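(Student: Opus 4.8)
The plan is to deduce this directly from Theorem \ref{uniqueness}.2 together with Proposition \ref{polyneron}, with no new work required.

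First I would fix an arbitrary sequence of parametrizing morphisms $(\mathcal{S})$ of the proper hyperbolic polycurve $X \to \Spec K(Z)$. Since $Z$ is a Dedekind scheme, its points of codimension $1$ are exactly its closed points, so the hypothesis on residual characteristics in the corollary is precisely what is needed to apply Theorem \ref{uniqueness}.2. The crucial point here is that the bound $2^{(|\chi(X)|+2)\times f_{(|\chi(X)|+2)}(n)}$ depends only on $\chi(X)$ and $n$, hence is uniform over all sequences of parametrizing morphisms (recall that $\chi(X)$ is independent of the chosen sequence by Lemma \ref{chi}). Applying Theorem \ref{uniqueness}.2 to $(\mathcal{S})$ produces a sequence of parametrizing morphisms $(\mathfrak{S}')$ of a proper hyperbolic polycurve $\mathfrak{X}' \to Z$ whose base change to $\Spec K(Z)$ is isomorphic to $(X,(\mathcal{S}))$, with $\mathfrak{X}$ canonically isomorphic to $\mathfrak{X}'$. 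By Definition \ref{reddef}.2 this is exactly the assertion that $X$ has good reduction with respect to $(\mathcal{S})$; since $(\mathcal{S})$ was arbitrary, $X$ has good reduction with respect to every sequence of parametrizing morphisms.

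For the existence of a proper N\'eron model, I would observe that $\mathfrak{X}'$ (equivalently $\mathfrak{X}$) is a proper hyperbolic polycurve over the Dedekind scheme $Z$; in particular each stage of $(\mathfrak{S}')$ is proper and smooth, so $\mathfrak{X}' \to Z$ is proper. By Proposition \ref{polyneron}, $\mathfrak{X}'$ is the N\'eron model of $\mathfrak{X}' \times_Z \Spec K(Z) \cong X$ over $Z$. (Alternatively one may invoke the final clause of Theorem \ref{uniqueness}.1, whose hypothesis is satisfied since $2g_{\mathcal{S}} \leq |\chi(X)|+2$ and $f_{\bullet}$ is monotone.) Hence $\mathfrak{X} \cong \mathfrak{X}'$ is a proper N\'eron model of $X$, which completes the argument.

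I do not expect any genuine obstacle: the corollary is essentially a repackaging of Theorem \ref{uniqueness} and Proposition \ref{polyneron}. The only point I would want to make explicit is the one noted above, namely that the numerical bound is phrased in terms of $|\chi(X)|$ rather than $g_{\mathcal{S}}$, so that a single hypothesis on residual characteristics suffices simultaneously for all choices of $(\mathcal{S})$.
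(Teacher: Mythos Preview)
Your argument is correct and matches the paper's own proof, which simply says the corollary follows from Theorem \ref{uniqueness}, Proposition \ref{polyneron}, and \cite[Corollary 2.5]{Liu}. Your use of Theorem \ref{uniqueness}.2 for the good-reduction part and Proposition \ref{polyneron} for the proper N\'eron model (noting that $\mathfrak{X}'\to Z$ is proper as a composite of proper maps) is exactly the intended route; the additional citation of \cite[Corollary 2.5]{Liu} in the paper is not needed once you invoke Proposition \ref{polyneron} directly, so your write-up is in fact slightly more self-contained.
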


\begin{proof}
Corollary \ref{cor} follows from Theorem \ref{uniqueness}, Proposition \ref{polyneron}, and {\cite[Corollary 2.5]{Liu}}.
\end{proof}

\label{neronsec}

\section{The Shafarevich conjecture for proper hyperbolic polycurves}
In this section, we prove the Shafarevich conjecture for proper hyperbolic polycurves.
Firstly, we will recall the Shafarevich conjecture for proper hyperbolic curves over finitely generated fields of characteristic of $0$, which was proved by Faltings. Then we will prove the main theorem (Theorem \ref{Shaf}) by using Faltings's result and results of Section \ref{neronsec}.

\begin{prop}[Faltings] 
Let $S$ be a normal connected scheme flat of finite type over $\Spec \Z$. Let $g \geq 2$ be an integer.
Then there exist at most finitely many isomorphism classes of proper hyperbolic curves of genus $g$ over $K(S)$ which have a smooth proper model over $S$.
\label{ShafFal}
\end{prop}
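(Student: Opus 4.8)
The plan is to deduce the statement from Faltings's finiteness theorem for polarized abelian varieties by passing from a curve to its Jacobian via the Torelli theorem. Note first that $K(S)$ is a finitely generated field of characteristic $0$, as $S$ is flat of finite type over $\Spec\Z$. Let $C$ be a proper hyperbolic curve of genus $g$ over $K(S)$ admitting a smooth proper model $\mathcal{C}\to S$. Since $S$ is connected and normal and the generic fibre of $\mathcal{C}\to S$ is geometrically connected of genus $g$, every fibre of $\mathcal{C}\to S$ is geometrically connected of genus $g$ (a standard property of smooth proper morphisms over a connected normal base); hence the relative Picard scheme $\mathcal{A}\deq\mathrm{Pic}^{0}_{\mathcal{C}/S}$ is an abelian scheme over $S$ of relative dimension $g$, with generic fibre the Jacobian $J\deq\mathrm{Jac}(C)$. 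The canonical principal polarization $\theta_{C}\colon J\xrightarrow{\ \sim\ }J^{\vee}$ extends over $S$ to an isomorphism $\mathcal{A}\xrightarrow{\ \sim\ }\mathcal{A}^{\vee}$ (for instance as the polarization attached to the relative effective divisor $W_{g-1}\subset\mathrm{Pic}^{g-1}_{\mathcal{C}/S}$, or by rigidity of abelian schemes over a connected normal base). Thus $C\mapsto(J,\theta_{C})$ sends a genus-$g$ proper hyperbolic curve over $K(S)$ with good reduction over $S$ to a principally polarized abelian variety of dimension $g$ over $K(S)$ that extends to an abelian scheme over $S$.

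Next I would invoke the Torelli theorem in its strong form: over an arbitrary field of characteristic $0$ and for $g\geq2$, a smooth proper geometrically connected curve is recovered up to isomorphism from its canonically polarized Jacobian, so $C\mapsto(J,\theta_{C})$ is injective on isomorphism classes. (Concretely, the natural homomorphism $\mathrm{Aut}(C)\to\mathrm{Aut}(J,\theta_{C})$ is injective and identifies $\mathrm{Aut}(C)$ either with all of $\mathrm{Aut}(J,\theta_{C})$, when $C$ is hyperelliptic, or with a direct factor of $\mathrm{Aut}(J,\theta_{C})=\mathrm{Aut}(C)\times\{\pm1\}$ otherwise; hence the induced map of pointed sets on $H^{1}(K(S),-)$ has trivial fibre over the base point, i.e.\ no nontrivial $K(S)$-form of $C$ shares its polarized Jacobian.) Consequently the construction of the previous paragraph yields an injection from the set of isomorphism classes of proper hyperbolic curves of genus $g$ over $K(S)$ with good reduction over $S$ into the set of isomorphism classes of principally polarized abelian varieties of dimension $g$ over $K(S)$ admitting an abelian scheme model over $S$.

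It then suffices to prove that the latter set is finite, which is Faltings's finiteness theorem for abelian varieties: over a number field there are only finitely many isomorphism classes of $g$-dimensional abelian varieties carrying a principal polarization and having good reduction over a given integral model, and the analogous statement over the finitely generated field $K(S)$ with the fixed model $S$ over $\Spec\Z$ is likewise due to Faltings (alternatively, it follows from the number-field case by spreading out and specializing along a suitable closed point of $S$ with number-field residue field). Together with the injection above, this proves the proposition. The genuine content is entirely this arithmetic input; the other ingredients — descent of good reduction from $\mathcal{C}$ to its relative Jacobian over all of $S$, and the Torelli theorem over a general base field — are standard, and in a fully written proof the only delicate point would be the passage of Faltings's theorem to the finitely generated field $K(S)$.
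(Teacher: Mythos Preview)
Your proposal is correct and follows essentially the same route as the paper's own proof: the paper simply shrinks $S$ to a regular open subscheme, invokes Faltings's finiteness theorem for polarized abelian varieties over finitely generated fields \cite[VI, \S1, Theorem~2]{Fal}, and concludes via the Torelli theorem. Your write-up spells out in more detail the passage to the relative Jacobian and the Galois-cohomological verification that the Torelli map is injective on $K(S)$-isomorphism classes, but the underlying strategy is identical.
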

\begin{proof}
We may shrink $S$ so that $S$ is regular.
Now Proposition \ref{ShafFal} follows from {\cite[VI, \S1, Theorem 2]{Fal}} and the Torelli theorem.
\end{proof}

\begin{prop}
Let $S$ be a normal connected scheme flat of finite type over $\Spec \Z$. Let $\chi$ be an integer, and $n$ a positive integer. Then there exist at most finitely many isomorphism classes of proper hyperbolic polycurves of dimension $n$ with Euler-Poincar\'e characteristic $\chi$ over $S$ with a sequence of parameterizing morphisms. 
\label{finiteness}
\end{prop}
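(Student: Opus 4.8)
The plan is to argue by induction on $n$, with Faltings's theorem (Proposition~\ref{ShafFal}) serving both as the base case and as the engine of the inductive step; the crucial observation is that every intermediate term of a sequence of parametrizing morphisms over $S$ is itself a normal connected scheme flat of finite type over $\Spec\Z$, so Faltings's finiteness remains available over it. For $n=1$, a proper hyperbolic polycurve of dimension $1$ over $S$ (with its necessarily trivial sequence) is just a proper hyperbolic curve $X\to S$ of some genus $g\ge 2$, and the equation $2-2g=\chi$ either pins down $g$ or has no solution. As $S$ is normal and connected, hence irreducible, Lemma~\ref{moduliunique} shows that two proper hyperbolic curves over $S$ with $K(S)$-isomorphic generic fibres are already $S$-isomorphic; since the generic fibre of $X\to S$ is a proper hyperbolic curve of genus $g$ over $K(S)$ admitting $X$ as a smooth proper model over $S$, Proposition~\ref{ShafFal} bounds the number of isomorphism classes.

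For the inductive step, suppose the statement holds for $n-1$ over every normal connected scheme flat of finite type over $\Spec\Z$. Let $(X,(\mathcal{S}))$ be as in the statement, with $(\mathcal{S}):X=X_n\to X_{n-1}\to\cdots\to X_1\to X_0=S$ and with $g_i\ge 2$ the genus of $X_i\to X_{i-1}$. Since $\prod_{i=1}^{n}(2-2g_i)=\chi$ and each factor is $\le -2$, only finitely many tuples $(g_1,\dots,g_n)$ can occur (if none does, the proposition is vacuous), and the genus tuple is an invariant of the isomorphism class of $(X,(\mathcal{S}))$; so it suffices to bound, for each fixed tuple, the number of isomorphism classes with that tuple. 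The truncation $(X_{n-1},(\mathcal{S}_{n-1}))$, where $(\mathcal{S}_{n-1}):X_{n-1}\to\cdots\to X_1\to S$, is a proper hyperbolic polycurve of dimension $n-1$ over $S$ with a sequence of parametrizing morphisms, of Euler-Poincar\'e characteristic $\prod_{i=1}^{n-1}(2-2g_i)=\chi/(2-2g_n)$, so by the inductive hypothesis it lies in one of finitely many isomorphism classes; fix representatives $(Y_1,(\mathcal{T}_1)),\dots,(Y_r,(\mathcal{T}_r))$.

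Each $Y_j$ is smooth over $S$ with geometrically connected fibres, hence is normal (smooth over a normal scheme) and connected (geometrically connected fibres over a connected base), and it is flat of finite type over $\Spec\Z$ because $S$ is. Applying Proposition~\ref{ShafFal} with $Y_j$ in place of $S$ gives finitely many isomorphism classes of proper hyperbolic curves of genus $g_n$ over $K(Y_j)$ admitting a smooth proper model over $Y_j$, and then Lemma~\ref{moduliunique} over the irreducible normal scheme $Y_j$ yields finitely many isomorphism classes of proper hyperbolic curves $Z\to Y_j$ of genus $g_n$. Finally, given any $(X,(\mathcal{S}))$ with the fixed genus tuple, choose an isomorphism of $(X_{n-1},(\mathcal{S}_{n-1}))$ with some $(Y_j,(\mathcal{T}_j))$; transporting $X_n\to X_{n-1}$ along it produces a curve $Z\to Y_j$ as above, compatibly with the sequences, so $(X,(\mathcal{S}))$ falls into one of finitely many isomorphism classes. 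Summing over the finitely many genus tuples finishes the induction.

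I do not expect a serious obstacle: the argument is a clean two-step induction whose only delicate points are the verification that each $Y_j$ satisfies the hypotheses of Proposition~\ref{ShafFal} and the bookkeeping showing that an isomorphism of $(n-1)$-truncations together with an isomorphism of the corresponding top curves assembles into an isomorphism of proper hyperbolic polycurves with sequences of parametrizing morphisms in the sense of Definition~\ref{hyperbolicpoly}.2.
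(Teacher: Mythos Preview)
Your proof is correct and follows essentially the same inductive strategy as the paper: both reduce to Proposition~\ref{ShafFal} for the base case and for the top curve in the inductive step, after noting that the truncation $X_{n-1}$ is again a normal connected scheme flat of finite type over $\Spec\Z$. The only cosmetic difference is that you first stratify by genus tuple whereas the paper bounds $|\chi(X_{n-1})|$ and $|\chi(X_n\to X_{n-1})|$ directly by $|\chi|$; otherwise the arguments coincide.
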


\begin{rem}
As written in Definition \ref{hyperbolicpoly}.3, the dimension of a proper hyperbolic polycurve is bounded by the absolute value of its Euler-Poincar\'{e} characteristic. Therefore, Proposition \ref{finiteness} is still true even if we do not fix the dimension.
\end{rem}

\begin{proof}
Let $A(n, \chi, S)$ be the set of isomorphism classes of proper hyperbolic polycurves with a sequence of parameterizing morphisms as in the statement of Proposition \ref{finiteness}.
We will prove Proposition \ref{finiteness} by induction on $n$.
The case of $n=1$ follows from Lemma \ref{moduliunique} and Proposition \ref{ShafFal}.
Let $n$ be a positive integer greater than $1$.
Take a pair $(\mathfrak{X},(\mathfrak{S})) \in A(n,\chi,S)$ with 
\begin{align*}
(\mathfrak{S}) \colon \mathfrak{X}=\mathfrak{X}_{n} \to \mathfrak{X}_{n-1} \to \cdots \to \mathfrak{X}_{1} \to \mathfrak{X}_{0} = S.
\end{align*}
Let $(\mathfrak{X}_{n-1}, (\mathfrak{S}'))$ be the proper hyperbolic polycurve with a sequence of parameterizing morphism cut out from $(\mathfrak{X}, \mathfrak{S})$. We have $(\mathfrak{X}_{n-1}, (\mathfrak{S}')) \in A(n-1, \chi(\mathfrak{X}_{n-1}),S)$ and $\left|\chi(\mathfrak{X}_{n-1})\right| \leq |\chi|$. 
By the induction hypothesis, we may fix the isomorphism class of $(\mathfrak{X}_{n-1}, (\mathfrak{S}'))$.
Since $\mathfrak{X}_{n-1}$ is a regular connected scheme flat of finite type over $\Spec \Z$, we have
$\mathfrak{X}_{n}\rightarrow \mathfrak{X}_{n-1} \in A(1,\chi(\mathfrak{X}_{n} \rightarrow \mathfrak{X}_{n-1}) ,\mathfrak{X}_{n-1})$.
Since $\left|\chi(\mathfrak{X}_{n}\to \mathfrak{X}_{n-1})\right| \leq \left| \chi \right|$, the desired finiteness follows from the case of $n=1$.
\end{proof}

\begin{thm}
Let $S$ be an integral scheme flat of finite type over $\Spec \Z$.
Let $\chi$ be an integer, and $n$ a positive integer.
Then there exist at most finitely many isomorphism classes of proper hyperbolic polycurves of dimension $n$ with Euler-Poincar\'e characteristic $\chi$ over $K(S)$ which have good reduction at any regular codimension $1$ point of $S$.
\label{Shaf}
\end{thm}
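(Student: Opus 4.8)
The plan is to deduce Theorem \ref{Shaf} from Proposition \ref{finiteness} (which already incorporates Faltings's theorem \ref{ShafFal}), using the purity theorem \ref{polypurity} together with the model-existence theorem \ref{uniqueness} to upgrade the hypothesis ``good reduction at the regular codimension $1$ points of $S$'' into the existence of a proper hyperbolic polycurve with a sequence of parametrizing morphisms over a suitable regular open subscheme.

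First I would reduce to a base with large residual characteristics at codimension $1$ points. Since $\chi$ and $n$ are fixed, the bound $B\deq 2^{(|\chi|+2)\times f_{(|\chi|+2)}(n)}$ occurring in Theorem \ref{uniqueness}.2 depends only on $\chi$ and $n$, and only finitely many primes $p$ satisfy $p\le B$; set $N\deq\prod_{p\le B}p$. As $S$ is of finite type over $\Spec\Z$ it is excellent, so its regular locus $S^{\mathrm{reg}}$ is open and (being nonempty) dense. Put $W\deq S^{\mathrm{reg}}\cap\bigl(S\times_{\Spec\Z}\Spec\Z[1/N]\bigr)$, a nonempty open subscheme of $S$; it is integral, regular, and flat of finite type over $\Spec\Z$, with $K(W)=K(S)$. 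Any point $w$ of $W$ of codimension $1$ in $W$ is, via the open immersion $W\hookrightarrow S$, a point of codimension $1$ in $S$ lying in $S^{\mathrm{reg}}$, hence a regular codimension $1$ point of $S$; therefore $X$ has good reduction at every codimension $1$ point of $W$. Moreover $W$ is a $\Z[1/N]$-scheme, so the residue characteristic of every point of $W$ is $0$ or a prime strictly greater than $B$.

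Next I would globalize the polycurve structure. Fix any sequence of parametrizing morphisms $(\mathcal{S})$ of $X\to\Spec K(S)$. For each codimension $1$ point $w$ of $W$, good reduction yields a proper smooth model $\mathfrak{X}_{w}\to\Spec\mathcal{O}_{W,w}$ of $X$ over the discrete valuation ring $\mathcal{O}_{W,w}$; since its residue characteristic is $0$ or exceeds $B$, Theorem \ref{uniqueness}.2 applied with $Z=\Spec\mathcal{O}_{W,w}$ produces a sequence of parametrizing morphisms of a proper hyperbolic polycurve over $\mathcal{O}_{W,w}$ whose base change to $\Spec K(S)$ is isomorphic to $(X,(\mathcal{S}))$. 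Thus the third condition of Theorem \ref{polypurity}.1 is satisfied for $W$, so there exists a sequence of parametrizing morphisms $(\mathfrak{S})$ of a proper hyperbolic polycurve $\mathfrak{X}'\to W$ whose base change to $\Spec K(S)$ is isomorphic to $(X,(\mathcal{S}))$. The pair $(\mathfrak{X}',(\mathfrak{S}))$ is then a proper hyperbolic polycurve of dimension $n$ with Euler-Poincar\'e characteristic $\chi$ over $W$ equipped with a sequence of parametrizing morphisms.

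Finally I would conclude: sending the isomorphism class of such an $X$, together with the chosen $(\mathcal{S})$, to the isomorphism class of $(\mathfrak{X}',(\mathfrak{S}))$ defines a map into the finite set of Proposition \ref{finiteness} (for the base $W$), and this map is injective because an isomorphism between two of the constructed pairs restricts over $K(W)=K(S)$ to an isomorphism of their generic fibers, in particular of the underlying hyperbolic polycurves. Hence the set of isomorphism classes in the statement is finite. The main obstacle is the presence of codimension $1$ points of small residual characteristic, at which Theorem \ref{uniqueness} is unavailable; this is precisely what forces the passage from $S$ to $W$: because $\chi$ and $n$ are fixed only finitely many primes cause trouble and may be inverted without changing $K(S)$, and passing beforehand to the regular locus makes purity (Theorem \ref{polypurity}) applicable --- harmlessly, since only good reduction at the \emph{regular} codimension $1$ points is assumed.
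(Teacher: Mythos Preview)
Your proposal is correct and follows essentially the same strategy as the paper: shrink $S$ to a regular open subscheme on which all residue characteristics at codimension~$1$ points exceed the bound from Theorem~\ref{uniqueness}, extend each $(X,(\mathcal{S}))$ to a proper hyperbolic polycurve with a sequence of parametrizing morphisms over that subscheme, and conclude by Proposition~\ref{finiteness}. The paper compresses the extension step into a single invocation of Theorem~\ref{uniqueness}, whereas you (more carefully) apply Theorem~\ref{uniqueness} locally at each codimension~$1$ point and then globalize via Theorem~\ref{polypurity}.1; this is exactly the content hidden in the reduction ``By Theorem~\ref{polypurity}, we may assume that $Z$ is the spectrum of a discrete valuation ring'' inside the proof of Theorem~\ref{uniqueness}.
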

\begin{rem}
As in Proposition \ref{finiteness}, we do not need to fix the dimension.
\end{rem}

%
%\begin{rem}
%\begin{enumerate}
%\item
%By the remark in Definition \ref{hyperbolicpoly}, the finiteness statement is still true even if we do not fix the dimension.
%\item
%Let $X$ be a proper hyperbolic poly curve over $K(T)$ as in Theorem \ref{Shaf}.
%Suppose that $T$ is regular and there exists proper hyperbolic polycurve $\mathfrak{X}$ over $T$ with $\mathfrak{X}_{K} \simeq X$.
%Then, by Remark \ref{hyp-uniq} and Proposition \ref{MB}, such extension $\mathfrak{X}$ is unique up to a canonical isomorphism.
%\item
%In the following proof, we also get the similar finiteness for proper hyperbolic polycurves with a sequence of parameterizing morphisms. 
%The following is the precise statement.
%\begin{claim}
%Let $T$ be a regular connected scheme flat of finite type over $\Spec \Z$. Let $\chi$ be an integer, and let $n$ be an positive integer. Then there exists a finitely many isomorphism classes of proper hyperbolic polycurves with a sequence of parameterizing morphisms of dimension $n$ with the Euler-Poincar\'e characteristic $\chi$ over $T$. 
%%we can prove the finiteness of the isomorphism classes of proper hyperbolic polycurves with parameterizing morphisms $(\mathcal{X},(\mathcal{S}))$ of dimension $n$ with the Euler-Poincar\'e characteristic $\chi$ over $T$.
%\end{claim}
%\end{enumerate}
%\label{remshaf}
%\end{rem}

\begin{proof}
Note that we can replace $S$ by another nonempty open subscheme of $S$.
Therefore, we may assume that $S$ is regular and that the residual characteristic of any point of $S$ of codimension $1$ is more than $2^{(|\chi|+2) \times f_{(|\chi(X)|+2)}(n)}$ or equal to $0$.
For any proper hyperbolic polycurve $X$ of dimension $n$ with Euler-Poincar\'e characteristic $\chi$ over $K(S)$, one can equip $X$ with a sequence of parameterizing morphisms $(\mathcal{S})$ over $K(S)$.
By the assumption on the residual characteristics and Theorem \ref{uniqueness}, the pair $(X,(\mathcal{S}))$ extends to a proper hyperbolic polycurve with a sequence of parameterizing morphisms over $S$ uniquely. 
Therefore, it suffices to show the finiteness of the isomorphism classes of proper hyperbolic polycurves with a sequence of parameterizing morphisms of dimension $n$ with Euler-Poincar\'{e} characteristic $\chi$ over $S$.
This follows from Proposition \ref{finiteness}.
\end{proof}

\label{sectshaf}

\section{An application of the Shafarevich conjecture}
In this section, we show the finiteness of isomorphism classes of proper hyperbolic polycurves over a fixed number field satisfying a condition determined by their \'etale fundamental groups.
This finiteness was proved in \cite{Saw} (cf.\,Remark \ref{Sawada}) by examining the geometric \'etale fundamental groups of proper hyperbolic polycurves.
We show this by using the Shafarevich conjecture of proper hyperbolic polycurves (cf.\,Theorem \ref{Shaf}) and \cite[Theorem 1.3]{Nag}.

Let $L$ be a field and $X \to \Spec L$ a proper hyperbolic polycurve.
Take a geometric point $\ast$ of $X$ and write $\pi_{1}(X, \ast) \to G_{L}$ for the surjective homomorphism between the \'etale fundamental groups induced by $X \to \Spec L$.
Note that $G_{L}$ is isomorphic to the absolute Galois group of $L$ defined by $\ast$.

\begin{cor}
Let $K$ be a field finitely generated over $\Q$, $G_{K}$ its absolute Galois group, and $\Pi \to G_{K}$ a surjective homomorphism of profinite groups.
Then there are at most finitely many $K$-isomorphism classes of proper hyperbolic polycurves whose \'etale fundamental groups are  isomorphic to $\Pi$ over $G_{K}$.
\label{sawcor}
\end{cor}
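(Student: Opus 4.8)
\noindent\emph{Proof plan.}
The plan is to reduce the corollary to the Shafarevich conjecture for proper hyperbolic polycurves (Theorem \ref{Shaf}), the bridge being the good reduction criterion \cite[Theorem 1.3]{Nag}, which recognises good reduction of a proper hyperbolic polycurve from the Galois action on its \'etale fundamental group. The idea is that since $\Pi \to G_{K}$ is fixed, this Galois-theoretic condition is the same for every $X$ in the set under consideration, so that all such $X$ have good reduction at exactly the same places; hence it suffices to exhibit a single such $X$ with good reduction away from a fixed finite set, and then invoke Theorem \ref{Shaf}.

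In more detail: let $\Sigma$ denote the set of $K$-isomorphism classes in the statement. We may assume $\Sigma \neq \emptyset$, and we fix a representative $X_{0}$. Write $\Delta \deq \ker(\Pi \to G_{K})$; for any $X$ representing a class in $\Sigma$ the fundamental exact sequence gives $\pi_{1}(X_{\bar{K}}, \ast) \cong \Delta$ compatibly with the $G_{K}$-actions, so by Remark \ref{fundEuler} all members of $\Sigma$ share one and the same Euler--Poincar\'e characteristic $\chi \deq \chi(\Delta, \F_{l})$ (for any prime $l$), and by Definition \ref{hyperbolicpoly}.3 their dimensions lie in $\{1, \dots, |\chi|\}$. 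Choose an integral scheme $S$, flat and of finite type over $\Spec \Z$, with $K(S) = K$; shrinking $S$ as in the proof of Theorem \ref{Shaf}, we may assume $S$ is regular and that every codimension-$1$ point of $S$ has residual characteristic $0$ or larger than the bound of Theorem \ref{uniqueness} (taken for $n = |\chi|$ and this $\chi$), so that over $S$ the properties ``possesses a smooth proper model'' and ``has good reduction with respect to some sequence of parametrizing morphisms'' coincide for the polycurves in question. Spreading $X_{0}$ out over a dense open of $S$ and then deleting the finitely many codimension-$1$ points of $S$ where it fails (together with their closures), we may further assume that $X_{0}$ has good reduction at every codimension-$1$ point of $S$.

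Now fix a codimension-$1$ point $z$ of $S$ and an inertia subgroup $I_{z} \subseteq G_{K}$ at $z$ (well defined up to conjugacy, as $\mathcal{O}_{S,z}$ is a discrete valuation ring with fraction field $K$). Because the residual characteristic at $z$ is $0$ or sufficiently large, \cite[Theorem 1.3]{Nag} applies and tells us that a proper hyperbolic polycurve $X$ over $K$ with $\pi_{1}(X, \ast) \cong \Pi$ over $G_{K}$ has good reduction at $z$ if and only if a certain condition holds for the action of $I_{z}$ on $\pi_{1}(X_{\bar{K}}, \ast)$ induced by conjugation in $\pi_{1}(X, \ast)$. Transporting this action through the fixed isomorphism $\pi_{1}(X, \ast) \cong \Pi$ over $G_{K}$, the condition depends only on the profinite extension $1 \to \Delta \to \Pi \to G_{K} \to 1$ and on $z$, not on $X$. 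Since $X_{0}$ has good reduction at every codimension-$1$ point of $S$, the condition holds at every such $z$, hence every $X$ representing a class in $\Sigma$ has good reduction at every (regular) codimension-$1$ point of $S$. Applying Theorem \ref{Shaf} over $S$ for each $n \in \{1, \dots, |\chi|\}$, we conclude that $\Sigma$ is finite.

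The main obstacle is the sufficiency half of \cite[Theorem 1.3]{Nag}: spreading out a single $X_{0}$ only shows that its good-reduction places satisfy the Galois-theoretic condition, whereas what is needed is that satisfaction of this condition \emph{forces} good reduction of every other polycurve with the same fundamental-group data at that place. This implication --- which ultimately rests on the extension theory of Section \ref{neronsec} (notably Theorem \ref{uniqueness}) together with the N\'eron property of smooth models (Proposition \ref{polyneron}) --- is the technical heart, and is exactly what \cite[Theorem 1.3]{Nag} supplies; the remainder of the argument is bookkeeping to put the hypotheses of Theorem \ref{Shaf} literally in force.
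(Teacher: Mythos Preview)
Your proof is correct and follows essentially the same approach as the paper's: fix a representative polycurve, spread it out over a suitable integral model $S$ of $K$, shrink $S$ so that the good-reduction criterion \cite[Theorem 1.3]{Nag} applies, and then use that this criterion depends only on $\Pi \to G_{K}$ to conclude that every member of $\Sigma$ has good reduction over $S$, whence Theorem \ref{Shaf} finishes. The only cosmetic differences are that the paper works one dimension $n$ at a time (and fixes $l=2$), and explicitly invokes Theorem \ref{polypurity}.1 to spread out the other polycurves, whereas you treat all $n \le |\chi|$ at once and feed the codimension-$1$ good reduction directly into Theorem \ref{Shaf}.
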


\begin{rem}
Sawada proved Corollary \ref{sawcor} in the case where $K$ is a generalized sub-$p$-adic field.
Moreover, he treated general (not necessarily proper) hyperbolic polycurves.
We give another proof of Corollary \ref{sawcor} because we can prove Corollary \ref{sawcor} immediately by using Theorem \ref{Shaf} and \cite[Theorem 1.3]{Nag} under the assumptions of Corollary \ref{sawcor}.
\label{Sawada}
\end{rem}

\begin{proof}
If the \'etale fundamental group of a proper hyperbolic polycurve over $K$ is isomorphic to $\Pi$ over $G_{K}$, its Euler-Poincar\'e characteristic coincides with $\chi(\mathrm{Ker}(\Pi\to G_{K}),\F_{2})$ by Remark \ref{fundEuler}.
Therefore, by the last sentence of Definition \ref{hyperbolicpoly}.3, it suffices to show that there are at most finitely many $K$-isomorphism classes of proper hyperbolic polycurves of dimension $n$ whose \'etale fundamental groups are  isomorphic to $\Pi$ over $G_{K}$ for every natural number $n$ (cf.\,Remark \ref{dimension}).
Moreover, we may assume that there exists a proper hyperbolic polycurve $X_{K}$ of dimension $n$ over $K$ whose \'etale fundamental groups are  isomorphic to $\Pi$ over $G_{K}$.
Take a regular connected flat scheme $S$ of finite type over $\Spec \Z$ whose function field is isomorphic to $K$.
Since we can replace $S$ by its open dense subscheme, we may assume that there exists a proper hyperbolic polycurve $X\to S$ whose base change to $K$ is isomorphic to $X_{K}\to \Spec K$ by Theorem \ref{uniqueness}.2.
Moreover, we replace $S$ by its sufficiently small open dense subscheme so that we can apply \cite[Theorem 1.3]{Nag} in this situation for $l=2$.
Hence, another proper hyperbolic polycurve $X'$ of dimension $n$ over $K$ whose \'etale fundamental group is isomorphic to $\Pi$ over $G_{K}$, has good reduction at any point of $S$ of codimension 1.
By Theorem \ref{polypurity}.1, $X'$ extends to a proper hyperbolic polycurve over $S$.
Hence, by Theorem \ref{Shaf}, Corollary \ref{sawcor} holds.
\end{proof}

\begin{rem}
In fact, as in the proof of \cite{Saw}, the dimension of a (proper) hyperbolic polycurve $X$ over a field $L$ of characteristic $0$ is determined by the profinite group $\mathrm{Ker}(\pi_{1}(X, \ast) \to G_{L})$.
\label{dimension}
\end{rem}

\label{sectpresc}

\end{document}